      \def\dC{{\mathbb C}}
   \def\dN{{\mathbb N}}   
      \def\dR{{\mathbb R}}
   \def\dZ{{\mathbb Z}}
   \def\cW{{\mathcal W}}
\def\bm\chi{\mbox{\boldmath$\chi$}}
\let\xker=\ker \def\ker{{\xker\,}}
\def\supp{{\rm supp\,}}
\newcommand{\HP}{Her\-mite-Pad\'e}
\newcommand{\sddots}{\begin{picture}(2,2)
\multiput(0,0)(1.5,1){3}{.}\end{picture}}
\def\deg{\operatorname{deg}}
\newtheorem{theorem}{Theorem}[section]
\newtheorem{proposition}[theorem]{Proposition}
\theoremstyle{remark}
\newtheorem{remark}[theorem]{Remark}
\numberwithin{equation}{section}
\newenvironment{thmbis}
  {\addtocounter{theorem}{-1}%
   \begin{theorem}}
  {\end{theorem}}
\author{Alexander I. Aptekarev}
\address{
Alexander I. Aptekarev\\
Keldysh Institute for Applied Mathematics\\
Russian Academy of Sciences\\
Miusskaya pl. 4\\
125047 Moscow, RUSSIA}
\author{Maxim Derevyagin}
\address{
Maxim Derevyagin\\
University of Mississippi\\
Department of Mathematics\\
Hume Hall 305 \\
P. O. Box 1848 \\
University, MS 38677-1848, USA }
\email{derevyagin.m@gmail.com}
\author{Walter Van Assche}
\address{
Walter Van Assche\\
KU Leuven\\
Department of Mathematics\\
Celestijnenlaan 200B box 2400\\
BE-3001 Leuven}
\date{\today}
 \subjclass{Primary 42C05, 37K10; Secondary 37K60, 39A12, 39A14, 65Q10.}
\keywords{Multiple orthogonal polynomials, discrete integrable system, discrete zero curvature condition, ordinary and partial difference equations, two-dimensional Schur-Euclid algorithm, two-dimensional continued fractions, recurrence relations}
\begin{document}

\title[Discrete integrable systems generated by HP approximants]{Discrete integrable systems generated by
Hermite-Pad\'e approximants}

\begin{abstract}
We consider \HP{} approximants in the framework of
discrete integrable systems defined on the lattice $\dZ^2$. We show that the concept of multiple orthogonality is intimately related to the Lax representations for the entries of the nearest neighbor recurrence relations and it thus gives rise to a discrete integrable system. We show that the converse statement is also true. More precisely, given the discrete integrable system in question there exists a perfect system of two functions, i.e., a system for which the entire table of \HP{} approximants exists. In addition, we give a few algorithms to find solutions of the discrete system. 

\end{abstract}

\maketitle

\section{Introduction}\label{sec:1}

Nowadays modern technologies allow us to handle an enormous amount of information. As a consequence of this development, it is in many instances more advantageous to face the analysis of discrete data rather than continuous data. For this reason we are witnessing that the world in this century
requires more and more the understanding of discrete models and that is why we decided to concentrate our attention on studying discrete models that take their origin in \textit{orthogonality}, one of the basic mathematical concepts.
Mathematically speaking, the discrete models we are going to consider are \textit{systems of difference equations}. Recent advances in a number of mathematical fields reveal that discrete systems are in many respects even more fundamental than continuous ones (for instance see \cite{Nov},  \cite{S2010}).

In this paper we follow the streamline of \textit{discrete integrable systems} (see \cite{B2004}, \cite{BS2002}) and our main interest is in discrete systems on $\dZ^2$ represented by a field of square  invertible  matrices
\begin{equation}\label{0 DIS}
L_{n,m}\,, \,\, M_{n,m}\,\in \,\dC^{d\times d}\, , \qquad n,m\in\dZ,
\end{equation}
which satisfy the \textit{discrete zero curvature} condition (or form \textit{a Lax pair}) on $\dZ^2$:
\begin{equation}\label{0 ZCC}
L_{n,m+1}M_{n,m}-M_{n+1,m}L_{n,m}=0\,.
\end{equation}
The elements of the discrete system \eqref{0 DIS} are transition matrices which define the evolution of the \textit{wave function} $\Psi_{n,m}$
\begin{equation}\label{0 WF}
\Psi_{n+1,m}(z)=L_{n,m}(z)\Psi_{n,m}(z), \quad \Psi_{n,m+1}(z)=M_{n,m}(z)\Psi_{n,m}(z),
\end{equation}
and the condition \eqref{0 ZCC} describes the consistency or integrability of the equations  \eqref{0 WF}. In turn, the relations \eqref{0 ZCC} represent a nonlinear system of difference equations.

Our findings are mainly inspired by the connection between discrete integrable systems, \textit{orthogonal polynomials}, 
and \textit{Pad\'e approximants}
(see \cite{PGR1995}, \cite{SNderK2011}). For example, the discrete dynamics
\begin{equation}\label{0 DTT}
x^k\, d\mu(x), \qquad k\in\dZ_+
\end{equation}
of the measure $d\mu$ supported on $[0,+\infty)$ generates a family of orthogonal polynomials $\{P_n^{(k)}(x)\}, \,\, \mbox{deg }P_n^{(k)}=n$. These polynomials also appear as numerators of Pad\'e approximants in the Pad\'e table and some nearest neighbour polynomials $P_n^{(k)}$ are related
$$
P_{n+1}^{(k)}(x)  =  xP_{n}^{(k+1)}(x)-V_{n}^{(k)}P_{n}^{(k)},  \qquad
P_{n+1}^{(k)}(x)  =  xP_{n}^{(k+2)}(x)-W_{n}^{(k)}P_{n}^{(k+1)},
$$
where the coefficients of the relations are expressed by means of the Hankel determinants
\begin{equation*}
V_{n}^{(k)}  =  \frac{S_{n+1}^{(k+1)}S_{n}^{(k)}}{S_{n}^{(k+1)}S_{n+1}^{(k)}} ,\quad
W_{n}^{(k)}  =  \frac{S_{n+1}^{(k+1)}S_{n}^{(k+1)}}{S_{n+1}^{(k)}S_{n}^{(k+2)}} ,\quad
S_{n+1}^{(k)}=
\left|\begin{array}{cccc}
s_{k}           & \ldots      &  s_{n+k}  \\
\vdots          & \sddots            &  \vdots  \\
s_{n+k}      & \ldots     &  s_{2n+k}
\end{array}\right|,
\end{equation*}
and $s_j=\int_{0}^{\infty}x^j\, d\mu(x)$.
Finally, the consistency of these relations gives the discrete zero curvature condition \eqref{0 ZCC} for the discrete system \eqref{0 DIS} of $2 \times 2$ matrices
\begin{equation*}
L_{n,k}=\left( \begin{array}{cc}
-V_{n}^{(k)} & x \\
-V_{n}^{(k)} & x + W_{n}^{(k)} -V_{n}^{(k+1)}
\end{array} \right), \quad
M_{n,k}=\frac{1}{x}\left( \begin{array}{cc}
0 & x \\
- V_{n}^{(k)} & x+ W_{n}^{(k)}
\end{array} \right).
\end{equation*}
Recall that in the theory of Pad\'e approximants this discrete system becomes
 the quotient-difference algorithm, and in integrable systems theory it leads to the discrete-time Toda equation
(see, e.g., \cite{Suris}).

\medskip

In the present paper we introduce a new
 class of discrete integrable systems of $3 \times 3$ matrices  \eqref{0 DIS}--\eqref{0 ZCC}. The construction  of these systems is based on the  theory of \textit{Hermite-Pad\'e rational approximants}, which were introduced by Hermite \cite{Her} in
connection to his outstanding proof of the transcendence of $e$. These days this theory is known to play an important role in various fields ranging from number theory \cite{Aper}, \cite{Apt2011}, \cite{vanA2001} to random matrix theory \cite{K2010}, \cite{AptKu}.

To proceed, let us briefly consider the concept of Hermite-Pad\'e rational approximants (for details, see the surveys \cite{Apt}, \cite{vanA1999}). Let $\vec{f} = (f_1,f_2)$ be a vector of Laurent series at infinity
\begin{equation}  \label{0 f}
   f_j(z) = \sum_{k=0}^\infty \frac{s_{j,k}}{z^{k+1}},\qquad j=1,2.
\end{equation}
The \textit{\HP{} rational approximants} (of type II)
\[  \pi_{\vec{n}} = \left( \frac{Q_{\vec{n}}^{(1)}}{P_{\vec{n}}},
    \frac{Q_{\vec{n}}^{(2)}}{P_{\vec{n}}} \right) \]
for the vector $\vec{f}$ and multi-index $\vec{n} = (n_1,  n_2) \in \mathbb{N}^2$
are defined by
\[  \deg P_{\vec{n}} \leq |\vec{n}| = n_1 + n_2, \]
\begin{equation}   \label{0 HP}
    f_j(z)P_{\vec{n}}(z) - Q_{\vec{n}}^{(j)}(z) =:
    R_{\vec{n}}^{(j)}(z) = \mathcal{O}\left(\frac{1}{z^{n_j+1}}\right), \qquad z \to \infty,
\end{equation}
where the $Q_{\vec{n}}^{(j)}$ are polynomials,
for $j=1, 2$.
This definition is equivalent to a homogeneous linear system of equations
for the coefficients of the polynomial $P_{n_1,n_2}$.
This system always has a solution, but the solution is not necessarily unique.
In the case of uniqueness (up to a multiplicative constant) and in
case any non-trivial solution has full degree $ \mbox{deg}\,P_{n_1,n_2}= n_1+n_2$,
the multi-index $(n_1,n_2)$ is called \textit{normal} and the polynomial $P_{\vec{n}}$ can
be normalized to be monic.

Clearly these polynomials can be put in a table $\{P_{n,m}\}$. If all indices of this table are normal, then the system of functions \eqref{0 f} is called a \textit{perfect system}. The notion of perfect systems was introduced by Mahler \cite{Mah}. For perfect systems, the \HP{} polynomials \eqref{0 HP} satisfy a system of recurrence
relations
\begin{equation} \label{0 RR}
\begin{cases}
    \,\,\,P_{n+1,m}(x) = (x-c_{n,m})P_{n,m}(x) - a_{n,m} P_{n-1,m}(x) - b_{n,m} P_{n,m-1}(x),  \\
    \,\,\,P_{n,m+1}(x) = (x-d_{n,m})P_{n,m}(x) - a_{n,m} P_{n-1,m}(x) - b_{n,m} P_{n,m-1}(x),
\end{cases}
\end{equation}
with $a_{0,m}=b_{n,0}=0$ for all $n,m \geq 0$ and $a_{n,m}\neq 0$, $b_{n,m}\neq 0$ for all other indices $(n,\,m)$.
As we will see, the consistency of these relations also leads to Lax pair representations, where the corresponding matrices $L_{n,m}$ and $M_{n,m}$ have the forms
\begin{equation} \label{Int_eq:2.8}
    L_{n,m}  = \begin{pmatrix}
                  x+ \alpha_{n,m}^{(1)}& \alpha_{n,m}^{(2)} & \alpha_{n,m}^{(3)} \\
                  \alpha_{n+1,m}^{(4)} & 0 &  0 \\
                  \alpha_{n+1,m}^{(5)} & 0 &  1
                 \end{pmatrix},\qquad
    M_{n,m}  = \begin{pmatrix}
                  x+\beta_{n,m}^{(1)} & \alpha_{n,m}^{(2)} & \alpha_{n,m}^{(3)} \\
                  \alpha_{n,m+1}^{(4)} & 1 &  0 \\
                  \alpha_{n,m+1}^{(5)} & 0 &  0
                 \end{pmatrix},
\end{equation}
where the entries are related to the coefficients of the recurrence relations \eqref{0 RR} for the \HP{} polynomials of the functions $f_1$ and $f_2$ as follows:
\begin{equation*}
    c_{n,m} = -\alpha_{n,m}^{(1)}, \quad
    d_{n,m} = -\beta_{n,m}^{(1)}, \quad
\   a_{n,m} = -\alpha_{n,m}^{(4)}\alpha_{n,m}^{(2)}, \quad b_{n,m} = -\alpha_{n,m}^{(5)}\alpha_{n,m}^{(3)}.
\end{equation*}
Both sets of coefficients  of the relations \eqref{0 RR} and of entries of the matrices \eqref{Int_eq:2.8} can be represented by  the power series coefficients  of the perfect system of functions \eqref{0 f}
\begin{equation} \label{0 a-s-alpha}
(a,\,b,\,c,\,d)_{n,m}\quad \longleftarrow \quad
\{s_{j,k}\}_{j=1,2} \quad \longrightarrow \quad 
\bigl( \alpha^{(1)}, \beta^{(1)}, \alpha^{(2)}, \ldots,\alpha^{(5)} \bigr)_{n,m}
\end{equation}
(details will be given below).
The main message of this note is to show that there are discrete integrable systems related to \HP{} approximation and the theory of such approximants allows  to construct solutions of these systems provided the initial boundary data are given. In this paper we restrict ourselves to the simplest case of \HP{} approximants generated by two functions, for which we have the following result. 
\begin{theorem} \label{T0 1}
The zero curvature condition \eqref{0 ZCC}
holds for a family of $3\times 3$ transition matrices $L_{n,m}$ and $M_{n,m}$ of the form \eqref{Int_eq:2.8} if and only if there is a perfect system of two functions \eqref{0 f} such that  $P_{n,m}$ are the \HP{} polynomials with the coefficients of the recurrence relations \eqref{0 RR} corresponding to \eqref{0 a-s-alpha}.
\end{theorem}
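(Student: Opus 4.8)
The plan is to read the zero curvature condition \eqref{0 ZCC} as the compatibility of the two first-order difference equations \eqref{0 WF} and to realize the wave function explicitly from the \HP{} data. First I would extract the wave function from the sparse shape of \eqref{Int_eq:2.8}: rows two and three of $L_{n,m}$ and $M_{n,m}$ force the ansatz $\Psi_{n,m} = \left( P_{n,m},\ \alpha^{(4)}_{n,m}P_{n-1,m},\ \alpha^{(5)}_{n,m}P_{n,m-1}\right)^{\!\top}$. With this choice the first rows of $L_{n,m}$ and $M_{n,m}$ reproduce, respectively, the two lines of \eqref{0 RR} once the dictionary \eqref{0 a-s-alpha} is used, i.e. $c_{n,m}=-\alpha^{(1)}_{n,m}$, $d_{n,m}=-\beta^{(1)}_{n,m}$, $a_{n,m}=-\alpha^{(4)}_{n,m}\alpha^{(2)}_{n,m}$, $b_{n,m}=-\alpha^{(5)}_{n,m}\alpha^{(3)}_{n,m}$.

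For the ``if'' direction I would start from a perfect system, so that the \HP{} polynomials exist at every node and satisfy \eqref{0 RR}. Subtracting the two lines of \eqref{0 RR} gives the nearest-neighbour identity $P_{n+1,m}-P_{n,m+1}=(d_{n,m}-c_{n,m})P_{n,m}$; shifting $m\mapsto m-1$ in it reduces the third row of $L_{n,m}$ (and symmetrically the second row of $M_{n,m}$) to a scalar relation among the $\alpha^{(5)}_{n,m}$, $c_{n,m}$, $d_{n,m}$ that holds because all these quantities are built from the same coefficients $s_{j,k}$ through determinantal identities. Hence $\Psi_{n+1,m}=L_{n,m}\Psi_{n,m}$ and $\Psi_{n,m+1}=M_{n,m}\Psi_{n,m}$. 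Since the remainders $R^{(1)}_{n,m},R^{(2)}_{n,m}$ of \eqref{0 HP} obey the same recurrences as $P_{n,m}$, they generate two further solution vectors; assembling $P_{n,m}, R^{(1)}_{n,m}, R^{(2)}_{n,m}$ into the top row of a $3\times3$ fundamental matrix $\Phi_{n,m}$ one gets $\Phi_{n+1,m}=L_{n,m}\Phi_{n,m}$ and $\Phi_{n,m+1}=M_{n,m}\Phi_{n,m}$. Computing $\Phi_{n+1,m+1}$ in the two possible orders yields $\left(L_{n,m+1}M_{n,m}-M_{n+1,m}L_{n,m}\right)\Phi_{n,m}=0$, and normality guarantees $\det\Phi_{n,m}\not\equiv 0$, so \eqref{0 ZCC} follows. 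The point needing care is the invertibility of $\Phi_{n,m}$, which I would reduce to a Wronskian-type computation exploiting the distinct orders of vanishing $R^{(j)}_{n,m}=\cO(z^{-n_j-1})$ at infinity.

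For the ``only if'' direction I would expand \eqref{0 ZCC} entrywise. Because of the sparse pattern of \eqref{Int_eq:2.8}, the nine scalar equations collapse to a short list of relations tying the $\alpha^{(i)}_{n,m},\beta^{(1)}_{n,m}$ at neighbouring nodes; these are the discrete evolution laws to be solved. The task is then to reverse the right-hand arrow of \eqref{0 a-s-alpha}: from a solution of these relations I would reconstruct a double-indexed array $\{s_{j,k}\}$, equivalently the functions $f_1,f_2$ of \eqref{0 f}, by integrating the evolution outward from prescribed boundary data on the coordinate axes, which is what the two-dimensional Schur--Euclid algorithm accomplishes. With $f_1,f_2$ in hand I would define $P_{n,m}$ through \eqref{0 RR} and check the interpolation conditions \eqref{0 HP}, thereby identifying them as the \HP{} polynomials of a perfect system.

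The main obstacle is this converse reconstruction. One must show both that the relations extracted from \eqref{0 ZCC} are consistent (the array $\{s_{j,k}\}$ is determined without over-determination) and, more substantially, that the resulting two-function system is perfect, i.e. every multi-index is normal. Normality is exactly what makes \eqref{0 RR} close with $a_{n,m},b_{n,m}\neq0$, so it is the genuine content of the theorem; I would expect to establish it by showing that the nonvanishing conditions $\alpha^{(i)}_{n,m}\neq0$ encoded in \eqref{Int_eq:2.8} translate, through \eqref{0 a-s-alpha}, into nonvanishing of the relevant \HP{} determinants.
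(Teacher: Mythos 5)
Your proposal is correct and takes essentially the same route as the paper: the fundamental matrix $\Phi_{n,m}$ you assemble from $P_{n,m}$ and the remainders $R^{(1)}_{n,m},R^{(2)}_{n,m}$ is precisely the Riemann--Hilbert solution $Y_{n,m}$ used in Proposition~\ref{Pr2} (with $\det Y_{n,m}=1$ playing the role of your Wronskian nonvanishing), and the compatibility $\Phi_{n+1,m+1}=L_{n,m+1}M_{n,m}\Phi_{n,m}=M_{n+1,m}L_{n,m}\Phi_{n,m}$ is exactly how the zero curvature condition is obtained there, while your converse --- recovering $f_1,f_2$ from the axis data (where the recurrences degenerate to three-term ones) and propagating inward by consistency --- is the content of Proposition~\ref{Prop4.5}. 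The one step you leave as an expectation, perfectness of the reconstructed system, is also the step the paper treats most lightly, deferring the details of the reconstruction to the algorithm of Filipuk, Haneczok and Van Assche.
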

The proof of this statement is immediate from Proposition \ref{Pr2}, which is a slight generalization of the result from \cite{vanA2011}, and Proposition \ref{Prop4.5}, which is the converse statement and is the basis for further development of the present paper.

Since the transition matrices are explicitly known, it is not so difficult to re-rewrite the discrete zero curvature condition in terms of the coefficients of \eqref{0 RR}. Thus, we have arrived at the following statement, which is simply an equivalent form of Theorem \ref{T0 1}. 
\begin{thmbis} \label{T0 2}
 The discrete Lax pair equations \eqref{0 ZCC} for the matrices $L_{n,m}$, $M_{n,m}$ of the form \eqref{Int_eq:2.8} are equivalent to the nonlinear system of difference equations for the coefficients of the recurrence relations \eqref{0 RR}
\begin{equation}  \label{0 DiffEq}
\begin{cases}
    \,\,\,c_{n,m+1}\,=\, c_{n,m}\,+\,\displaystyle\frac{(a+b)_{n+1,m}\,-\,(a+b)_{n,m+1}}{(c-d)_{n,m}}  \\
    \,\,\,d_{n+1,m}\,=\, d_{n,m}\,+\,\displaystyle\frac{(a+b)_{n+1,m}\,-\,(a+b)_{n,m+1}}{(c-d)_{n,m}}  \\
    \,\,\,a_{n,m+1}\,=\,a_{n,m} \,\displaystyle\frac{(c-d)_{n,m}}{(c-d)_{n-1,m}} \\
    \,\,\,b_{n+1,m}\,=\,b_{n,m} \,\displaystyle\frac{(c-d)_{n,m}}{(c-d)_{n,m-1}}
\end{cases} \qquad n,m \geq 0\,,
\end{equation}
where the initial data $(c,\,a)_{n,0}$, $(d,\,b)_{0,m}$ are supposed to be given and the coefficients also satisfy the boundary conditions $a_{0,m} = 0 = b_{n,0}$. Moreover, the system has a solution such that  $a_{n,m}\neq 0$, $b_{n,m}\neq 0$ for $n,m>0$ if and only if the initial data correspond to a perfect system.
\end{thmbis}

This theorem explicitly gives us the underlying boundary value problem. Moreover, the way it is written, one can easily get an idea about the continuum limit of this discrete system.

Once the boundary value problem is given, it is natural to look for its solution. To this end, we present here a new method of solving the discrete system using branched continued fractions related to \HP{} approximants. Furthermore, these branched continued fractions are introduced here for the first time and they appear as the multiple orthogonal adaptation of the classical continued fraction representation \eqref{toGen} that solves some inverse problems for finite Jacobi matrices \cite{GS}. Also, it is used in dynamical systems and asymptotic analysis.

Another natural question is whether the initial data lead to a well-posed problem. In order to understand this issue a bit more deeper, let us notice that \HP{} approximants are intimately related to the notion of multiple orthogonal polynomials.
If the coefficients of the Laurent series (\ref{0 f}) are the moments of positive measures $\mu_1$ and  $\mu_2$
supported on $\mathbb{R}$
\begin{equation}   \label{0 fMOP}
   f_j(z) = \sum_{k=0}^\infty \frac{s_{j,k}}{z^{k+1}} = \int_{\mathbb{R}} \frac{d\mu_j(x)}{z-x}, \qquad
  s_{j,k} = \int_{\mathbb{R}} x^k\, d\mu_j(x),
\end{equation}
then the \HP{} denominators $P_{n_1,n_2}$ from \eqref{0 HP} satisfy
\begin{equation}  \label{0 MOP}
\int P_{n_1,n_2}(x) x^k\, d\mu_j(x) = 0, \qquad k=0,1,\ldots,n_j-1,\qquad  j=1,2.
\end{equation}
Polynomials  defined by the system of orthogonality relations \eqref{0 MOP} are called \textit{multiple orthogonal polynomials.} The idea of this concept is the following: given two measures $(\mu_1, \mu_2)$, we distribute the $n_1+n_2$ orthogonality relations between
these measures and aim to find a monic polynomial $P_{n_1,n_2}$ of degree $ \mbox{deg}\,P_{n_1,n_2}=n_1+n_2$ that is orthogonal to the $n_1$ first monomials with respect to the first measure and to the $n_2$ first monomials with respect to the second measure.

The multiple orthogonal polynomials (i.e., the  \HP{} polynomials) inherit all the remarkable properties for Hermite-Pad\'e approximants, like existence  of the monic polynomials of full degree for the normal indices and the recurrence relations \eqref{0 RR}, which were obtained for the first time in 
\cite{vanA2011} for the multiple orthogonal polynomials. In the context of our paper we use these polynomials to generate a general class of perfect systems for which the corresponding tables of multiple orthogonal polynomials exist entirely.

One should not think that any two measures form a perfect system. It is actually not a trivial task to check whether one can obtain such a table for any two measures. Moreover, there are measures for which it is impossible to define polynomials for all indices. So, in order to address this issue we give a short review of Angelesco and Nikishin systems at the end of Section 5, which are actually pairs of measures for which the corresponding table of multiple polynomials exist entirely and, therefore, the boundary data obtained from such systems lead to well-posed problems for the discrete system in question. Some algorithms for solving the boundary value problem will be mentioned in Section 5 as well.

To conclude the introduction we want to remark that it is also possible to consider an analogue of the dynamics \eqref{0 DTT} for multiple orthogonal polynomials and get a discrete integrable system similar to the qd-algorithm. Moreover, the qd-algorithm was already partially adapted to the case of multiple orthogonal polynomial in \cite{I2003}. This will be considered in more detail elsewhere. 
\bigskip

\noindent{\bf Structure of the paper.} The following two sections serve as an introduction to the topic. In particular we give in Section~\ref{sec:2}  more explanations about general discrete integrable systems. Then, in Section~\ref{sec:3}, we consider some known $2 \times 2$ matrix relations from the theory of orthogonal polynomials and continued fractions, i.e.,
the theory that concerns classical diagonal Pad\'e approximants. A generalization of these relations to the $3 \times 3$ matrix case for \HP{} approximants and multiple orthogonal polynomials, which plays a decisive role for establishing the connection to discrete integrable systems represented by $3 \times 3$ matrices, is presented in Section~\ref{sec:4}. Particularly, in that section we give and prove several propositions, which lead to a proof of Theorem~\ref{T0 1} and, in turn, Theorem \ref{T0 2}. Finally, in Section 5 we provide the reader with a generic class of perfect systems such as Angelesco and Nikishin systems. These systems generate the boundary data for which the discrete integrable system is solvable.

\bigskip

\noindent{\bf Acknowledgements.} A.I. Aptekarev was supported by grant RScF-14-21-00025. M. Derevyagin thanks the hospitality of the Department of Mathematics of KU Leuven, where his part of the research was mainly done while he was a postdoc there. M. Derevyagin and W. Van Assche gratefully acknowledge the support of FWO Flanders project G.0934.13, KU Leuven research grant OT/12/073 and the Belgian Interuniversity Attraction Poles 
programme P07/18.

\section{The generic Lax representations}\label{sec:2}

Here we recall some basic notions in the theory of discrete integrable systems following \cite{BSbook}
(see also \cite{Adler2001}, \cite{BS2002}, \cite{PGR1995}, and \cite{SNderK2011}).

Let us consider a regular square lattice $\dZ^2$, that is the set of all pairs $(n,m)$ of integer numbers $n$ and $m$.
The main object of our study is {\it wave functions} $\Psi_{n,m}$ defined on all the vertices $(n,m)$ of $\dZ^2$
and having their values in $\dC^{k\times k}$ (for simplicity, we restrict ourselves here to the cases $k=2$ and $k=3$).
The wave function $\Psi_{n,m}$ depends on a complex parameter $z$, which is interpreted as the spectral
parameter. We assume that for any oriented edge the values of the wave function at the vertices that this edge connects are related via
{\it transition matrices} $L_{n,m}$ and $M_{n,m}$ as follows
\[
\Psi_{n+1,m}(z)=L_{n,m}(z)\Psi_{n,m}(z), \quad \Psi_{n,m+1}(z)=M_{n,m}(z)\Psi_{n,m}(z).
\]
We always require that the transition matrices are  invertible and therefore one has
 \[
\Psi_{n,m}(z)=L_{n,m}^{-1}(z)\Psi_{n+1,m}(z), \quad \Psi_{n,m}(z)=M_{n,m}^{-1}(z)\Psi_{n,m+1}(z).
\]
It is clear that the value of the wave function must not depend on the path one takes to get to the corresponding vertex. Thus, in order that the wave function $\Psi_{n,m}$ is well defined, the following {\it zero curvature condition}
must be satisfied
\begin{equation}\label{Lax}
L_{n,m+1}M_{n,m}-M_{n+1,m}L_{n,m}=0, \qquad n,m\in\dZ.
\end{equation}
As is known, the zero curvature condition is equivalent to  integrability. Thus,
a discrete system that admits the representation \eqref{Lax} is called integrable \cite{Adler2001}, \cite{B2004}, \cite{BS2002}.

Before going further, let us take a careful look at the zero curvature condition. Observe at first that
one can rewrite \eqref{Lax} as
\[
L_{n,m}^{-1}M_{n+1,m}^{-1}L_{n,m+1}M_{n,m}=I.
\]
Next, from the following picture
\begin{center}
\begin{tikzpicture}
\draw[step=2cm,gray,very thin] (-0.9,-0.9) grid (2.9,2.9);
\draw[very thick] (0,0) rectangle (2,2);
\filldraw[black] (0,0) circle (2pt) node[anchor=north east] {(n,m)};
\draw[very thick, ->] (0,1)node[anchor=north east] {$M_{n,m}$};
\filldraw[black] (0,2) circle (2pt) node[anchor=south east] {(n,m+1)};
\draw[very thick, ->] (0.9,2)--(1,2) node[anchor=south] {$L_{n,m+1}$};
\filldraw[black] (2,2) circle (2pt) node[anchor=south west] {(n+1,m+1)};
\draw[very thick, <-] (2,0.9)--(2,1) node[anchor=north west] {$M_{n+1,m}^{-1}$};
\filldraw[black] (2,0) circle (2pt) node[anchor=north west] {(n+1,m)};
\draw[very thick, ->] (1.1,0)--(1,0) node[anchor=north west] {$L_{n,m}^{-1}$};
\end{tikzpicture}
\end{center}
we see that the zero curvature condition implies that the product of the transition matrices along
the oriented simple square path on $\dZ^2$ beginning at the vertex $(n,m)$ is the identity matrix.
This observation can be immediately extended to the case of domino paths by reducing them to the just considered simplest case.
\begin{center}
\begin{tikzpicture}
\draw[step=1cm,gray,very thin] (-1.9,-1.9) grid (2.9,2.9);
\draw[ultra thick] (0,0) rectangle (1,2);
\draw[dashed, very thick] (0,1)--(1,1);
\end{tikzpicture}
\end{center}
Now it is clear how to generalize this statement to the case of any closed oriented path on $\dZ^2$.
Thus the condition \eqref{Lax} means that if one fixes a closed oriented path on the lattice $\dZ^2$, then
the product of the transition matrices in the order they appear along the path must be the identity matrix.
Note that this property resembles the Cauchy theorem for holomorphic functions and, therefore, it can be considered
as its noncommutative multiplicative analogue for functions on $\dZ^2$.
The relation \eqref{Lax} is also called the Lax representation and this is one of the ways to say that the underlying
discrete system is integrable.

It turns out that different types of wave functions appear in the theory of orthogonal polynomials and they  are very useful to achieve a big variety of goals. However, it has to be pointed out that the discrete integrable systems and wave functions, that have their origin in orthogonality, are naturally defined on $\dN^2$, where $\dN=\{1,2,3, \dots\}$.
Luckily one can appropriately extend them to $\dZ_+^2$ or even to $\dZ^2$ depending on the needs.

\section{Orthogonal polynomials via $2\times 2$ matrix polynomials}\label{sec:3}

In this section we briefly go over the ideas in the theory of ordinary orthogonal polynomials in order to consider more general objects in Section~\ref{sec:4} and to get a discrete integrable system whose Lax pair is expressed via $3\times 3$ matrices that come out in a natural way in the situation that is of interest in this paper..

\subsection{The Schur-Euclid algorithm}\label{sec:31}

Suppose we are given a nontrivial Borel measure $d\mu$ on the real line $\dR$. Assume also that all the moments of the measure $d\mu$
are finite. Then the Schur algorithm, which is a straightforward generalization of Euclid's algorithm, leads
to the following continued fraction
\begin{equation*}
\varphi(z)=\int_{\dR}\frac{d\mu(t)}{t-z}\sim
-\frac{1}{\displaystyle{z-a_0-\frac{b_0^2}{\displaystyle{z-a_1-\frac{b_1^2}{\ddots}}}}},
\end{equation*}
where $b_j^2>0$ and $a_j\in\dR$ for $j=0,1,\dots$. This continued fraction is called a $J$-fraction and such a representation actually exist for a larger class of analytic functions.

It is natural to consider continued fractions as infinite sequences of linear fractional transformations. In particular,
in the case of the $J$-fraction, one has the following sequence
\begin{equation*}
    \varphi_j(z)=T_j\left(\varphi_{j+1}(z)\right)=-\frac{1}{z-a_j+
    b_j^2\varphi_{j+1}(z)}, \qquad j\in\dZ_+,
\end{equation*}
with the initial condition $\varphi_0=\varphi$.
Also, it is well known that a linear fractional transformation can be represented as a $2\times 2$ matrix, i.e.
\begin{equation*}
T_j\mapsto\cW_j(z)=\begin{pmatrix}0 & -\frac{1}{b_j}\\
                            b_j &  \frac{z-a_j}{b_j}
                            \end{pmatrix},\qquad j\in\dZ_+.
\end{equation*}
Let us now introduce matrices corresponding to the approximants for the $J$-fraction, that is, the finite truncations
of the continued fraction:
\begin{equation}\label{helpW}
\cW_{[n,0]}(z)=\cW_0(z)\cW_1(z)\dots\cW_n(z), \qquad n\in\dZ_+.
\end{equation}
Before showing how to construct a set of a wave functions and transition matrices on $\dZ^2$ let us see what the elements
of the matrix polynomial $\cW_{[n,0]}$ are. To this end, we put
\begin{equation*}
   \left(%
\begin{array}{c}
  -Q_0 \\
  P_0 \\
\end{array}%
\right):=\left(%
\begin{array}{c}
  0 \\
  1 \\
\end{array}%
\right),\quad
\left(%
\begin{array}{c}
 -Q_{j+1}(z) \\
 P_{j+1}(z) \\
\end{array}%
\right):=\cW_{[j,0]}(z)\left(%
\begin{array}{c}
  0 \\
  1 \\
\end{array}%
\right), \quad j\in\dZ_+ .
\end{equation*}
Then taking into account the relation $\cW_{[j,0]}(z)=\cW_{[j-1,0]}(z)\cW_{j}(z)$ we also have that
\begin{equation*}
\cW_{[j,0]}(z)\left(%
\begin{array}{c}
  1 \\
  0 \\
\end{array}%
\right)=
\cW_{[j-1,0]}(z)\left(%
\begin{array}{c}
  0 \\
  b_j \\
\end{array}%
\right)=
\left(%
\begin{array}{c}
  -b_j Q_j(z) \\
  b_j P_j(z) \\
\end{array}%
\right), \quad j\in\dN.
\end{equation*}
So, the matrix $\cW_{[j,0]}$ has the following form
\begin{equation*}
\cW_{[j,0]}(z)=\left(%
\begin{array}{cc}
  -b_j Q_j(z) & -Q_{j+1}(z) \\
  b_j P_j(z) & P_{j+1}(z) \\
\end{array}%
\right), \quad j\in\dZ_+ .
\end{equation*}
Furthermore, rewriting the relation entrywise
\[
\left(%
\begin{array}{c}
 -Q_{j+1}(z) \\
 P_{j+1}(z) \\
\end{array}%
\right)=\cW_{[j-1,0]}(z)\cW_{j}(z)\left(%
\begin{array}{c}
  0 \\
  1 \\
\end{array}%
\right)=
\frac{1}{b_j}\cW_{[j-1,0]}(z)\left(%
\begin{array}{c}
  -1 \\
  z-a_j \\
\end{array}%
\right), \quad j\in\dN,
\]
we see that the polynomials $P_{j}$, $Q_{j}$ are solutions of the following three-term recurrence relation
\begin{equation}\label{3termOP}
b_{j-1}u_{j-1}(z)+a_ju_{j}(z)+b_{j}u_{j+1}(z)=zu_j(z),\qquad j\in\dN,
\end{equation}
with the initial conditions
\begin{equation*}\label{DiffEq}
 \begin{split}
    P_0(z)&=1,\quad P_1(z)=\frac{z-a_0}{b_0},\\
     Q_0(z)&=0,\quad Q_1(z)=\frac{1}{b_0}.
\end{split}
\end{equation*}
Thus the entries of the matrix $\cW_{[n,0]}$ are orthogonal polynomials of the first and second kind and the corresponding
orthogonality measure is $\mu$. It is worth mentioning that such $2\times 2$ matrix polynomials are extensively used in the theory of moment problems \cite{Ach1961} and that this theory is a particular case of the theory of canonical systems \cite[Chapter 8]{Sakh1997} (see also \cite{Sakh1999}).

To conclude this section, we recall the representation \cite{Khinchin} (see also \cite{GS})
\begin{equation}\label{toGen}
-\frac{b_jP_{j+1}(z)}{P_j(z)}=-\frac{1}{\displaystyle{z-a_j-\frac{b_{j-1}^2}{\displaystyle{z-a_{j-1}-\frac{b_{j-2}^2}{\ddots -\displaystyle{\frac{b_0^2}{z-a_0}}}}}}}
\end{equation}
that will be generalized to the case of multiple orthogonal polynomials and then it will be used in the scheme of finding solutions of the discrete system under consideration.

\subsection{Riemann-Hilbert problems}\label{sec:32}

Here we consider a different interpretation of the Schur-Euclid algorithm in the context of Riemann-Hilbert problems,
which turned out to be quite efficient for asymptotic analysis. Recall that in \cite{FIK1992} a fascinating characterization of
orthogonal polynomial in terms of a Riemann-Hilbert problem was found.
We will explain this characterization here briefly. To this end, we consider a weight function $w$ on $\dR$ that is smooth and has sufficient decay at $\pm\infty$ so that
all the moments $\int_{\dR}x^kw(x)\,dx$ exist. Then the Riemann-Hilbert problem (RHP) consists of the following:
find a $2\times 2$ matrix valued function $Y_n(z)=Y(z)$ such that
\begin{enumerate}
    \item[(i)]
        $Y(z)$ is  analytic for $z\in\dC \setminus \dR$.
    \item[(ii)]
        $Y$ possesses continuous boundary values for $x\in\dR$
        denoted by $Y_{+}(x)$ and $Y_{-}(x)$, where $Y_{+}(x)$ and $Y_{-}(x)$
        are the limiting values of $Y(z')$ as $z'$ approaches $x$ from
        above and below, respectively, and
        \begin{equation}\label{RHPYb}
            Y_+(x) = Y_-(x)
            \begin{pmatrix}
                1 & w(x) \\
                0 & 1
            \end{pmatrix},
            \qquad x\in\dR.
        \end{equation}
    \item[(iii)]
        $Y(z)$ has the following asymptotic behavior at infinity:
        \begin{equation} \label{RHPYc}
            Y(z)= \left(I+ \mathcal{O} \left( \frac{1}{z} \right)\right)
            \begin{pmatrix}
                z^{n} & 0 \\
                0 & z^{-n}
            \end{pmatrix}, \qquad z \to\infty.
        \end{equation}
\end{enumerate}
Before giving the solution of this RHP for $Y$, let us recall that the monic orthogonal polynomials
$\pi_n(z)=z^n+\dots$ satisfy the following three term recurrence relation:
\[
z\pi_j(z)=\pi_{j+1}(z)+a_j\pi_{j}(z)+b_{j-1}^2\pi_{j-1}(z),\qquad j\in\dZ_+.
\]
According to \cite{FIK1992}, the matrix valued function $Y(z)$ given by
    \begin{equation} \label{RHPYsolution}
        Y(z) =
        \begin{pmatrix}
            \pi_n(z) & \frac{1}{2\pi i} \int_{\dR}  \frac{\pi_n(x) w(x)}{x-z}\,dx \\[2ex]
            -2\pi i \gamma_{n-1}^2 \pi_{n-1}(z) & -\gamma_{n-1}^2 \int_{\dR} \frac{\pi_{n-1}(x)w(x)}{x-z}\,dx
        \end{pmatrix}
    \end{equation}
is the unique solution of the RHP for $Y$. Here $\gamma_n$ is the leading coefficient of the corresponding orthonormal polynomial.
Observe that $\det Y$ is an analytic function on $\mathbb{C} \setminus \mathbb{R}$
which has no jump on the real axis. Therefore, $\det Y$ is an entire function. Its behaviour near infinity
is $\det Y(z) = 1 + \mathcal{O}(1/z)$. Thus by Liouville's theorem we find that $\det Y = 1$. Consequently, one can consider
the matrix
\[
  W_{n} = Y_{n+1} Y_{n}^{-1}.
\]
Clearly $W_{n}$ is an analytic function on $\dC \setminus \dR$, and since $Y_{n}$
and $Y_{n+1}$ have the same jump matrix on $\mathbb{R}$ we see that $L_{n}$ has no jump on $\mathbb{R}$.
Hence $W_{n}$ is an entire matrix function. We write the asymptotic condition in the following form
\[
Y_{n}(z) = \left( I + \frac{A(n)}{z} + \mathcal{O}(1/z^2) \right)
		\begin{pmatrix} z^{n} & 0  \\ 0 & z^{-n} \end{pmatrix},
\]
where $A(n,m)$ is the $2\times 2$ matrix coefficient of $1/z$ in the $\mathcal{O}(1/z)$ term.
After some calculations and using Liouville's theorem, we find that
\begin{equation}\label{OP_transM}
W_{n} = \begin{pmatrix}
                  z+A_{1,1}(n+1)-A_{1,1}(n)  & -A_{1,2}(n)   \\
                  A_{2,1}(n+1)  & 0
                 \end{pmatrix}, \qquad n\in\dZ_+.
\end{equation}

\begin{remark}\label{RfactorRH}
In fact we haven't fully used the Riemann-Hilbert problem to recover the wave function
and transition matrices in this case. What we actually exploited is the fact that the solution admits the following factorization
\[
Y_n(z)=R_n(z)\begin{pmatrix}
                1 & \int_{\dR}  \frac{w(x)\,dx}{x-z}  \\
                0 & 1
            \end{pmatrix},
\]
where $R_n(z)$ is a matrix polynomial that has the form
\[
R_n=W_{n-1}\dots W_{0}.
\]
Basically, $R_n$ has a structure similar to that of $\cW_{[n,0]}$ (see formula \eqref{helpW}).
Moreover, $R_n$ coincides with $\cW_{[n,0]}$ up to a constant factor and the inversion.
Now, we can clearly see that what we really need here is the Cauchy transform $\int_{\dR}  \frac{w(x)dx}{x-z}$ and its asymptotic 
behavior at infinity
in order to have \eqref{RHPYc}. Therefore, it is clear that one can repeat all the steps for any Borel measure with finite moments of all orders.
In other words, we have arrived at the Schur-Euclid algorithm:
\begin{enumerate}
    \item[(i)]
        We start with the function
        \[
       Y(z)=Y_0(z)=\begin{pmatrix}
                1 & \int_{\dR}  \frac{d\mu(x)}{x-z}  \\
                0 & 1
            \end{pmatrix},
         \]
         where $\mu$ is a probability Borel measure with finite moments of all orders;
    \item[(ii)]
        Having constructed $Y_n$, we look for the transition matrix $L_{n,0}$ of the form \eqref{OP_transM} such that
        the function
        \[
        Y_{n+1}=W_{n}Y_n
        \]
        obeys the asymptotic condition \eqref{RHPYc}.
\end{enumerate}
Let us emphasize that the transition matrix in step (ii) is uniquely determined due to the construction.

In the next section we will see that Riemann-Hilbert problems
admit generalizations in higher dimensions. Thus, they can serve as a tool to develop the multidimensional
Schur-Euclid algorithm.
\end{remark}

\section{\HP{} and Multiple orthogonal polynomials}\label{sec:4}

Here we present a discrete integrable system associated with a family of \HP{} approximants  and multiple orthogonal polynomials.

\subsection{Two-dimensional recurrence relations}\label{sec:41}

We begin by recalling a generalization of orthogonal polynomials to \HP{} polynomials $P_{n,m}$ for two functions $(f_1,f_2)$, which are analytic in a neighbourhood of infinity. It follows from the Cauchy theorem applied to \eqref{0 HP} that \HP{} polynomials satisfy the orthogonality relations
\begin{equation}  \label{4 HPOR}
\oint_{\Gamma}P_{n_1,n_2}(z) \,z^k\,f_j(z)\, dz = 0, \qquad k=0,1,\ldots,n_j-1,\quad  j=1,2,       
\end{equation}
where the contour $\Gamma:=\partial\Omega$ is the boundary of a domain $\Omega\ni\infty$ in which the functions $f_j\in H(\Omega),\,\,j=1,2$ have holomorphic (analytic and single-valued) continuations. We note that the orthogonality relations \eqref{4 HPOR} are non-Hermitian. They actually become Hermitian when the functions $(f_1,f_2)$ are the Cauchy transforms \eqref{0 MOP} of positive measures $\mu_1,\mu_2$ with compact support on the real line. In this case, the coefficients of the Laurent series \eqref{0 f} for $(f_1,f_2)$ can be considered as the moments of $\mu_1,\mu_2$:
\[
s_k^{(j)} = \oint_{\Gamma} z^k\,f_j(z)\, dz \qquad \longrightarrow \qquad s_k^{(j)} = \int x^k\, d\mu_j(x),\quad j=1,2.
\]

Using the determinant of the coefficients $s_k^{(j)}$
\begin{equation} \label{eq:2.12}
   S_{n,m} = \left| \begin{matrix}
                   s_0^{(1)} & s_1^{(1)} & \cdots & s_{n-1}^{(1)} \\
                   s_1^{(1)} & s_2^{(1)} & \cdots & s_{n}^{(1)} \\
	            \vdots & \vdots & \cdots & \vdots \\
                   s_{n+m-1}^{(1)} & s_{n+m}^{(1)} & \cdots & s_{2n+m-2}^{(1)} \end{matrix}		
           \begin{matrix}
                   s_0^{(2)} & s_1^{(2)} & \cdots & s_{m-1}^{(2)} \\
                   s_1^{(2)} & s_2^{(2)} & \cdots & s_{m}^{(2)} \\
	            \vdots & \vdots & \cdots & \vdots \\
                   s_{n+m-1}^{(2)} & s_{n+m}^{(2)} & \cdots & s_{n+2m-2}^{(2)} 		
                  \end{matrix} \right|,
\end{equation}
we can write a formula for the \HP{} polynomials
\small
 \begin{equation} \label{eq:2.12_1}   P_{n,m}(x) = \frac{1}{S_{n,m}}
          \left|\begin{matrix}
                   s_0^{(1)} & s_1^{(1)} & \cdots & s_{n-1}^{(1)} \\
                   s_1^{(1)} & s_2^{(1)} & \cdots & s_{n}^{(1)} \\
	            \vdots & \vdots & \cdots & \vdots \\
                   s_{n+m}^{(1)} & s_{n+m+1}^{(1)} & \cdots & s_{2n+m-1}^{(1)} \end{matrix}		
           \begin{matrix}
                   s_0^{(2)} & s_1^{(2)} & \cdots & s_{m-1}^{(2)} \\
                   s_1^{(2)} & s_2^{(2)} & \cdots & s_{m}^{(2)} \\
	            \vdots & \vdots & \cdots & \vdots \\
                   s_{n+m}^{(2)} & s_{n+m+1}^{(2)} & \cdots & s_{n+2m-1}^{(2)} 		
                  \end{matrix}
          \begin{matrix} 1 \\ x \\ \vdots \\ x^{n+m} \end{matrix} \right|
\end{equation}
\normalsize
provided that $S_{n,m}$ is nonvanishing. The latter case is a criterion of normality of the index $(n,m)$.
In this paper we assume that
all multi-indices are normal and we investigate the nearest-neighbor recurrence relations.

In \cite{WVAGerKui} a matrix Riemann-Hilbert problem formulation for multiple orthogonal was proposed. Here we slightly generalize this approach for the case of \HP{} polynomials. We can formulate the following Riemann-Hilbert problem:
find a $3\times 3$ matrix function $Y$ such that
\begin{enumerate}
\item[(i)] $Y$ is analytic on $\mathbb{C} \setminus \Gamma,$ i.e., $Y\in H(\Omega)$ and $Y\in H(\mathbb{C} \setminus \overline{\Omega})$,
\item[(ii)] the continuous limits $Y_{+}(x): = \displaystyle\lim_{\Omega\ni\,\xi\rightarrow x\in \,\Gamma } Y(\xi)$,
$Y_{-}(x): = \displaystyle\lim_{\{\mathbb{C} \setminus \overline{\Omega}\}\ni\,\xi\rightarrow x\in \,\Gamma } Y(\xi)$ exist and
\begin{equation} \label{eq:2.1}
      Y_+(x) = Y_-(x) \begin{pmatrix}
                        1 & f_1(x) & f_2(x) \\
                        0 &  1  & 0  \\
                        0 & 0 & 1
                        \end{pmatrix}, \qquad x \in \Gamma,
\end{equation}
\item[(iii)] for $z \to \infty$ one has
\begin{equation} \label{eq:2.2}
     Y(z) = \big( I + \mathcal{O}(1/z) \big) \begin{pmatrix} z^{n+m} & 0 & 0 \\ 0 & z^{-n} & 0 \\ 0 & 0 & z^{-m} \end{pmatrix}.
\end{equation}
\end{enumerate}
Following \cite{FIK1992}, \cite{WVAGerKui} it is easy to show that this Riemann-Hilbert problem
has a unique solution in terms of  the \HP{} polynomials when $(n,m)$, $(n-1,m)$ and $(n,m-1)$
are normal indices, i.e.,
\begin{equation}  \label{eq:2.3}
  Y = \begin{pmatrix}
      P_{n,m} & C_1(P_{n,m}) & C_2(P_{n,m}) \\
      c_1(n,m) P_{n-1,m} & c_1 C_1(P_{n-1,m}) & c_1 C_2(P_{n-1,m}) \\
      c_2(n,m) P_{n,m-1} & c_2 C_1(P_{n,m-1}) & c_2 C_2(P_{n,m-1})
      \end{pmatrix}
\end{equation}
where the Cauchy transform is used
\[
C_j(P) = \frac{1}{2\pi i} \oint_{\Gamma} \frac{P(x)f_j(x)}{x-z}\, dx, \qquad j=1,2,
\]
and the constants $c_1$ and $c_2$ are given by 
$$\frac{-2\pi i}{c_1(n,m)} =  \oint_{\Gamma} P_{n-1,m}(x) x^{n-1}f_1(x)\, dx,  \,\,
\frac{-2\pi i}{c_2(n,m)} =  \oint_{\Gamma}P_{n,m-1}(x) x^{m-1}f_2(x)\, dx. $$

One of the natural outcomes of representing the \HP{} polynomials in the form of Riemann-Hilbert problems
is the nearest-neighbour recurrence relations.

\begin{proposition} \label{thm:1}
Suppose all multi-indices $(n,m) \in\dZ_+^2$ are normal.
Then the \HP{}  polynomials satisfy the system of recurrence
relations \eqref{0 RR}.
\end{proposition}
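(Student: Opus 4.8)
The plan is to establish the two lines of \eqref{0 RR} separately, using only the orthogonality relations \eqref{4 HPOR} and the normality of the indices $(n,m)$, $(n-1,m)$, $(n,m-1)$. I would start with the first relation. Since $P_{n,m}$ and $P_{n+1,m}$ are monic of degrees $n+m$ and $n+m+1$, the difference
$$D(x) := xP_{n,m}(x) - P_{n+1,m}(x)$$
is a polynomial of degree at most $n+m$. Using $\oint_\Gamma xP_{n,m}(x)\,x^k f_j(x)\,dx = \oint_\Gamma P_{n,m}(x)\,x^{k+1} f_j(x)\,dx$ together with \eqref{4 HPOR} for $P_{n,m}$ and $P_{n+1,m}$, I would check that $D$ satisfies the \emph{reduced} orthogonality $\oint_\Gamma D(x)\,x^k f_1(x)\,dx = 0$ for $k\le n-2$ and $\oint_\Gamma D(x)\,x^k f_2(x)\,dx = 0$ for $k\le m-2$, while exactly two moments survive, namely the pairing against $x^{n-1}f_1$, which equals $h_1 := \oint_\Gamma P_{n,m}(x)\,x^{n}f_1(x)\,dx$, and the pairing against $x^{m-1}f_2$, which equals $h_2 := \oint_\Gamma P_{n,m}(x)\,x^{m}f_2(x)\,dx$.

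Next I would observe that $P_{n,m}$, $P_{n-1,m}$, and $P_{n,m-1}$ all satisfy this same reduced orthogonality (immediate from \eqref{4 HPOR} for their respective indices), and that among the three the only nonzero surviving pairings are $k_1 := \oint_\Gamma P_{n-1,m}(x)\,x^{n-1}f_1(x)\,dx$ and $k_2 := \oint_\Gamma P_{n,m-1}(x)\,x^{m-1}f_2(x)\,dx$; in particular $P_{n,m}$ pairs to zero against both $x^{n-1}f_1$ and $x^{m-1}f_2$. The non-vanishing of $k_1,k_2$ follows from normality of $(n,m)$: if $k_1=0$, then $P_{n-1,m}$, a polynomial of degree $n+m-1$, would satisfy all $(n,m)$ orthogonality conditions, contradicting normality (similarly for $k_2$). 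Setting $a_{n,m}:=h_1/k_1$ and $b_{n,m}:=h_2/k_2$, the polynomial
$$E(x) := D(x) - a_{n,m}P_{n-1,m}(x) - b_{n,m}P_{n,m-1}(x)$$
has degree at most $n+m$ and now satisfies the \emph{full} relations \eqref{4 HPOR} for the index $(n,m)$. By normality of $(n,m)$ this forces $E = c_{n,m}P_{n,m}$ for a scalar $c_{n,m}$, which rearranges to the first line of \eqref{0 RR}.

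For the second relation I would run the identical argument with $D'(x) := xP_{n,m}(x) - P_{n,m+1}(x)$. The crucial point, and the step I expect to require the most care, is that the two surviving moments of $D'$ are again precisely $h_1$ and $h_2$: the terms $\oint_\Gamma P_{n,m+1}(x)\,x^{n-1}f_1(x)\,dx$ and $\oint_\Gamma P_{n,m+1}(x)\,x^{m-1}f_2(x)\,dx$ both vanish (orthogonality for the index $(n,m+1)$ covers $k\le n-1$ against $f_1$ and $k\le m$ against $f_2$), so the subtracted polynomial contributes nothing to the relevant moments. Hence the very same constants $a_{n,m}=h_1/k_1$ and $b_{n,m}=h_2/k_2$ clear the surviving moments of $D'$, and normality of $(n,m)$ gives $D' - a_{n,m}P_{n-1,m} - b_{n,m}P_{n,m-1} = d_{n,m}P_{n,m}$, i.e.\ the second line of \eqref{0 RR} with the \emph{same} $a_{n,m},b_{n,m}$; this shared dependence is exactly what links the two recurrences. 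Finally, the boundary conditions $a_{0,m}=0$ and $b_{n,0}=0$ are automatic: when $n=0$ (resp.\ $m=0$) there is no surviving $f_1$-moment (resp.\ $f_2$-moment) to remove, so the corresponding coefficient vanishes and the spurious term $P_{-1,m}$ (resp.\ $P_{n,-1}$) never enters.
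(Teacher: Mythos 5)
Your argument is correct, but it takes a genuinely different route from the paper. You give the classical, direct moment-based proof: subtract $P_{n+1,m}$ (resp.\ $P_{n,m+1}$) from $xP_{n,m}$, observe that the difference satisfies the reduced orthogonality of the index $(n-1,m-1)$ with only the two pairings against $x^{n-1}f_1$ and $x^{m-1}f_2$ possibly surviving, kill those with $a_{n,m}P_{n-1,m}+b_{n,m}P_{n,m-1}$ (the denominators $k_1,k_2$ being nonzero by normality of $(n,m)$), and invoke normality once more to identify the remainder with a multiple of $P_{n,m}$; the computation that both $P_{n+1,m}$ and $P_{n,m+1}$ contribute nothing to the surviving moments is exactly what forces the same $a_{n,m},b_{n,m}$ in both lines, and this is handled correctly. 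The paper instead works through the $3\times 3$ Riemann--Hilbert problem \eqref{eq:2.1}--\eqref{eq:2.2}: it shows $\det Y_{n,m}=1$, applies Liouville's theorem to $L_{n,m}=Y_{n+1,m}Y_{n,m}^{-1}$ and $M_{n,m}=Y_{n,m+1}Y_{n,m}^{-1}$ to obtain the explicit linear matrix polynomials \eqref{eq:2.8} and \eqref{eq:2.11}, and reads the recurrences off the $(1,1)$ entries. What your approach buys is elementarity and explicit formulas $a_{n,m}=h_1/k_1$, $b_{n,m}=h_2/k_2$ in terms of moments; what the paper's approach buys is that it simultaneously produces the transition matrices needed for the Lax pair in Proposition \ref{Pr2} and identifies the recurrence coefficients with entries of the residue matrix $A(n,m)$, which is what the rest of the paper runs on. One small addendum if you want the full content of \eqref{0 RR} as stated (including $a_{n,m}\neq 0$ for $n\geq 1$ and $b_{n,m}\neq 0$ for $m\geq 1$): note that $h_1=\oint_\Gamma P_{n,m}(x)x^nf_1(x)\,dx\neq 0$, since otherwise $P_{n,m}$, of degree $n+m$, would satisfy the orthogonality conditions of the index $(n+1,m)$, contradicting its normality --- the same normality trick you already used for $k_1,k_2$.
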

\begin{proof} As we already mentioned in the introduction, the recurrence relations \eqref{0 RR} for multiple orthogonal polynomials were obtained in \cite{vanA2011}. Here, we will follow the same approach.
Actually, the proof presented in \cite{vanA2011} uses the Riemann-Hilbert problem but, as we see later, the main ingredient of that proof
is a factorization similar to the one revealed in Remark \ref{RfactorRH}. Basically,
the proof goes along the same lines as the construction of the wave function from the Riemann-Hilbert
problem in the case of orthogonal polynomials (see Section 3.2).

Let us start by making the standard observation that $\det Y$ is an analytic function in $\mathbb{C} \setminus \mathbb{R}$
with no jump on the contour $\Gamma$. Hence $\det Y$ is an entire function and its behavior near infinity
is $\det Y(z) = 1 + \mathcal{O}(1/z)$. Thus, by Liouville's theorem we find that $\det Y = 1$.   We can therefore consider
the matrix
\[
L_{n,m} = Y_{n+1,m} Y_{n,m}^{-1},
\]
where the subscript $(n,m)$ is used for the solution \eqref{eq:2.3} of the Riemann-Hilbert problem with
the polynomial $P_{n,m}$ in the entry of the first row and the first column of $Y_{n,m}$.
Clearly $L_{n,m}$ is an analytic function on $\mathbb{C} \setminus \mathbb{R}$, and since $Y_{n,m}$
and $Y_{n+1,m}$ have the same jump matrix on $\mathbb{R}$ we see that $L_{n,m}$ has no jump on $\mathbb{R}$.
Hence $R_1$ is an entire matrix function. If we write the asymptotic condition \eqref{eq:2.2} as
\[    Y_{n,m}(z) = \left( I + \frac{A(n,m)}{z} + \mathcal{O}(1/z^2) \right)
		\begin{pmatrix} z^{n+m} & 0 & 0 \\ 0 & z^{-n} & 0 \\ 0 & 0 & z^{-m} \end{pmatrix}, \]
where $A(n,m)$ is the $3\times 3$ matrix coefficient of $1/z$ in the $\mathcal{O}(1/z)$ term of \eqref{eq:2.2}, then after some calculus
and in view of Liouville's theorem we find
\begin{equation} \label{eq:2.8}
    L_{n,m}  = \begin{pmatrix}
                  z+A_{1,1}(n+1,m)-A_{1,1}(n,m) & -A_{1,2}(n,m) & -A_{1,3}(n,m) \\
                  A_{2,1}(n+1,m) & 0 &  0 \\
                  A_{3,1}(n+1,m) & 0 &  1
                 \end{pmatrix},
\end{equation}
where $A_{i,j}(n,m)$ is the entry on row $i$ and column $j$ of $A(n,m)$.
We can therefore write
\begin{equation} \label{eq:2.9}
      Y_{n+1,m} =L_{n,m}  Y_{n,m}.
\end{equation}
In a similar way we also have
\begin{equation} \label{eq:2.10}
      Y_{n,m+1} = M_{n,m}  Y_{n,m},
\end{equation}
with
\begin{equation} \label{eq:2.11}
    M_{n,m}  = \begin{pmatrix}
                  z+A_{1,1}(n,m+1)-A_{1,1}(n,m) & -A_{1,2}(n,m) & -A_{1,3}(n,m) \\
                  A_{2,1}(n,m+1) & 1 &  0 \\
                  A_{3,1}(n,m+1) & 0 &  0
                 \end{pmatrix}.
\end{equation}
Now, introducing
\begin{equation} \label{cd}
    c_{n,m} = A_{1,1}(n,m)-A_{1,1}(n+1,m), \quad
    d_{n,m} = A_{1,1}(n,m)-A_{1,1}(n,m+1)
\end{equation}
and
\begin{equation} \label{ab}   a_{n,m} = c_1(n,m)A_{1,2}(n,m), \quad b_{n,m} = c_2(n,m)A_{1,3}(n,m)\end{equation}
we see that the $(1,1)$-entry of \eqref{eq:2.9} gives the first relation in \eqref{0 RR}
$$ P_{n+1,m}(x) = (x-c_{n,m})P_{n,m}(x) - a_{n,m} P_{n-1,m}(x) - b_{n,m} P_{n,m-1}(x),$$
and \eqref{eq:2.10} gives the second relation in \eqref{0 RR}
$$P_{n,m+1}(x) = (x-d_{n,m})P_{n,m}(x) - a_{n,m} P_{n-1,m}(x) - b_{n,m} P_{n,m-1}(x). $$
\end{proof}

\subsection{Discrete integrable systems represented by $3\times 3$ matrices}\label{sec:42}
Another immediate consequence of the reformulation of \HP{} approximation in terms of Riemann-Hilbert problems is a bridge between the corresponding recurrence relations and discrete integrable system whose transition matrices are $3\times 3$ matrices.
\begin{proposition}\label{Pr2}
Let $(f_1,f_2)$ be a perfect system., i.e., all the multi-indices $(n,m)$ are normal. 
Then there exists a wave function \eqref{eq:2.3} on $\dZ_+^2$ and its
transition matrices are given by \eqref{Int_eq:2.8}:
\begin{equation*}
    L_{n,m}  = \begin{pmatrix}
                  x+ \alpha_{n,m}^{(1)}& \alpha_{n,m}^{(2)} & \alpha_{n,m}^{(3)} \\
                  \alpha_{n+1,m}^{(4)} & 0 &  0 \\
                  \alpha_{n+1,m}^{(5)} & 0 &  1
                 \end{pmatrix},\qquad
    M_{n,m}  = \begin{pmatrix}
                  x+\beta_{n,m}^{(1)} & \alpha_{n,m}^{(2)} & \alpha_{n,m}^{(3)} \\
                  \alpha_{n,m+1}^{(4)} & 1 &  0 \\
                  \alpha_{n,m+1}^{(5)} & 0 &  0
                 \end{pmatrix},
\end{equation*}
whose entries are related to the coefficients of the recurrence relations \eqref{0 RR} for the \HP{} polynomials of the
functions $f_1$ and $f_2$ as follows:
\begin{equation} \label{abcd}
    c_{n,m} = -\alpha_{n,m}^{(1)}, \quad
    d_{n,m} = -\beta_{n,m}^{(1)}, \quad
\   a_{n,m} = -\alpha_{n,m}^{(4)}\alpha_{n,m}^{(2)}, \quad b_{n,m} = -\alpha_{n,m}^{(5)}\alpha_{n,m}^{(3)}.
\end{equation}
\end{proposition}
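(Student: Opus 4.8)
The plan is to build directly on the proof of Proposition~\ref{thm:1}, which has already produced the wave function \eqref{eq:2.3} on $\dZ_+^2$ together with transition matrices $L_{n,m}$ and $M_{n,m}$ in the explicit forms \eqref{eq:2.8} and \eqref{eq:2.11}. Since $(f_1,f_2)$ is perfect, all of $(n,m)$, $(n-1,m)$, $(n,m-1)$ are normal for every $(n,m)\in\dZ_+^2$, so the solution \eqref{eq:2.3} and hence these matrices exist at every vertex. The whole content of the proposition is then to recast \eqref{eq:2.8}--\eqref{eq:2.11} into the normalized template \eqref{Int_eq:2.8} and to check the dictionary \eqref{abcd}; almost everything is bookkeeping once the right labels are chosen.

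First I would set
\[
\alpha_{n,m}^{(1)} = A_{1,1}(n+1,m)-A_{1,1}(n,m), \quad \beta_{n,m}^{(1)} = A_{1,1}(n,m+1)-A_{1,1}(n,m),
\]
\[
\alpha_{n,m}^{(2)} = -A_{1,2}(n,m), \quad \alpha_{n,m}^{(3)} = -A_{1,3}(n,m), \quad \alpha_{n,m}^{(4)} = A_{2,1}(n,m), \quad \alpha_{n,m}^{(5)} = A_{3,1}(n,m),
\]
where $A_{i,j}(n,m)$ are the entries of the matrix $A(n,m)$ of \eqref{eq:2.2}. The key structural point, which is exactly what makes the template \eqref{Int_eq:2.8} work, is that the $(1,2)$ and $(1,3)$ entries of $L_{n,m}$ and of $M_{n,m}$ are \emph{the same} in \eqref{eq:2.8} and \eqref{eq:2.11}, namely $-A_{1,2}(n,m)$ and $-A_{1,3}(n,m)$; hence both matrices may share the single pair $\alpha_{n,m}^{(2)},\alpha_{n,m}^{(3)}$. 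Likewise the first-column off-diagonal entries of $L_{n,m}$ are $A_{2,1}(n+1,m),A_{3,1}(n+1,m)$ and those of $M_{n,m}$ are $A_{2,1}(n,m+1),A_{3,1}(n,m+1)$, i.e.\ they are shifts of the \emph{common} functions $\alpha^{(4)},\alpha^{(5)}$ evaluated at $(n+1,m)$ and $(n,m+1)$ respectively. With these assignments a direct entrywise comparison turns \eqref{eq:2.8} into the $L_{n,m}$ of \eqref{Int_eq:2.8} and \eqref{eq:2.11} into the $M_{n,m}$ of \eqref{Int_eq:2.8}, with $x$ denoting the spectral variable $z$.

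It remains to verify \eqref{abcd}. The definitions \eqref{cd} give $c_{n,m}=-\alpha_{n,m}^{(1)}$ and $d_{n,m}=-\beta_{n,m}^{(1)}$ immediately. For the products I first establish the two auxiliary identities $c_1(n,m)=A_{2,1}(n,m)$ and $c_2(n,m)=A_{3,1}(n,m)$. These follow by matching leading asymptotics: the $(2,1)$ entry of $Y_{n,m}$ is $c_1(n,m)P_{n-1,m}(z)$, which, since $P_{n-1,m}$ is monic of degree $n+m-1$, has leading term $c_1(n,m)z^{n+m-1}$; on the other hand, expanding $\big(I+A(n,m)/z+\mathcal{O}(1/z^2)\big)\,\mathrm{diag}(z^{n+m},z^{-n},z^{-m})$ shows that this same entry has leading term $A_{2,1}(n,m)z^{n+m-1}$, whence $c_1(n,m)=A_{2,1}(n,m)$, and the same argument applied to the $(3,1)$ entry $c_2(n,m)P_{n,m-1}(z)$ gives $c_2(n,m)=A_{3,1}(n,m)$. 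Substituting into \eqref{ab} then yields $a_{n,m}=c_1(n,m)A_{1,2}(n,m)=\alpha_{n,m}^{(4)}(-\alpha_{n,m}^{(2)})=-\alpha_{n,m}^{(4)}\alpha_{n,m}^{(2)}$ and similarly $b_{n,m}=-\alpha_{n,m}^{(5)}\alpha_{n,m}^{(3)}$, completing \eqref{abcd}.

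The only place demanding care, and thus the main (mild) obstacle, is exactly this bookkeeping of index shifts: one must keep track that the single functions $\alpha^{(4)},\alpha^{(5)}$ appear in $L_{n,m}$ evaluated at $(n+1,m)$ but in $M_{n,m}$ at $(n,m+1)$, and that the same $\alpha^{(2)},\alpha^{(3)}$ really do serve both matrices. There is no analytic difficulty beyond the leading-order asymptotic matching used to pin down $c_1=A_{2,1}$ and $c_2=A_{3,1}$; normality of the relevant multi-indices, guaranteed by perfectness, is what makes \eqref{eq:2.3} and all the above expansions legitimate at every $(n,m)$.
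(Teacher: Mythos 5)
Your proposal is correct and follows essentially the same route as the paper: both derive the matrices from the Riemann--Hilbert factorization established in Proposition~\ref{thm:1}, relabel the entries of \eqref{eq:2.8} and \eqref{eq:2.11} to match the template \eqref{Int_eq:2.8}, and use $c_1(n,m)=A_{2,1}(n,m)$, $c_2(n,m)=A_{3,1}(n,m)$ together with \eqref{cd} and \eqref{ab} to obtain \eqref{abcd}. Your leading-order asymptotic matching supplies a justification of these two normalization identities that the paper merely asserts, so no gap remains.
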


\begin{proof}
We take a function $Y$ of the form \eqref{eq:2.3}, then \eqref{eq:2.9} and \eqref{eq:2.10} give us
transition matrices $L_{n,m},\,M_{n,m}$ of the form  \eqref{eq:2.8} and \eqref{eq:2.11}.
Taking into account that the normalizing factors in \eqref{ab} are
$$ c_1(n,m) = A_{2,1}(n,m)\quad \mbox{and} \quad c_2(n,m) = A_{3,1}(n,m),$$
the relations \eqref{ab} and \eqref{cd} give \eqref{abcd}.
Finally, we notice that to prove \eqref{eq:2.9} and \eqref{eq:2.10} we only used the fact that $Y$
admits the following factorization
\[
Y(z)=R(z)\begin{pmatrix}
                        1 &\oint_{\Gamma} \frac{f_1(x)}{x-z}\,dx& \oint_{\Gamma} \frac{f_2(x)}{x-z}\,dx\\
                        0 &  1  & 0  \\
                        0 & 0 & 1
                        \end{pmatrix},
\]
where $R$ is a matrix polynomial.

\end{proof}

\begin{remark}
As we saw in Section 3.1, a continued fraction is just a sequence of $2\times 2$ matrices whose determinants  equal 1. 
Now we see that starting with a perfect system of two functions one can get a pair of two-dimensional sequences of $3\times 3$ matrices, which is a certain two-dimensional generalization of continued fractions. So, the scheme to find transition matrices is actually a certain two-dimensional generalization of the Schur-Euclid algorithm:
\begin{enumerate}
    \item[(i)]
        We start with the function
        \[
       Y(z)=Y_0(z)=\begin{pmatrix}
                        1 &\oint_{\Gamma} \frac{f_1(x)}{x-z}\,dx& \oint_{\Gamma} \frac{f_2(x)}{x-z}\,dx\\
                        0 &  1  & 0  \\
                        0 & 0 & 1
                        \end{pmatrix},
         \]
         where $f_1$ and $f_2$ are Laurent series \eqref{0 f};
    \item[(ii)]
        Having constructed $Y_{n,m}$, we look for the transition matrices $L_{n,m}$ and $M_{n,m}$ of the
        form \eqref{eq:2.8} and \eqref{eq:2.11}, such that
        the functions
        \[
        Y_{n+1,m} =L_{n,m}  Y_{n,m},\quad
        Y_{n,m+1} = M_{n,m}  Y_{n,m},
        \]
        obey the corresponding asymptotic condition \eqref{eq:2.2}.
\end{enumerate}
Note that the transition matrices in step (ii) are uniquely determined due to the construction. In Section 5 this idea will be further developed and conventional continued fractions will appear there.
\end{remark}

Now we simplify the zero curvature condition
\[
0 = L_{n,m+1} M_{n,m} -  M_{n+1,m} L_{n,m},
\]
to the form of \eqref{0 DiffEq}.
\begin{proof}[Proof of Theorem~\ref{T0 2}] In \cite{vanA2011} the consistency condition for the  recurrence coefficients of \eqref{0 RR} was obtained in the following form:
\smallskip

\begin{equation} \label{eq:3.1}\begin{cases}
    d_{n+1,m}-d_{n,m} = c_{n,m+1} - c_{n,m}, \\
    b_{n+1,m}-b_{n,m+1} + a_{n+1,m}-a_{n,m+1} = \det \begin{pmatrix} d_{n+1,m} & d_{n,m} \\
	                                                             c_{n,m+1} & c_{n,m} \end{pmatrix},
  \\
   \displaystyle \frac{a_{n,m+1}}{a_{n,m}} = \frac{c_{n,m}-d_{n,m}}{c_{n-1,m}-d_{n-1,m}},  \\
   \displaystyle \frac{b_{n+1,m}}{b_{n,m}} = \frac{c_{n,m}-d_{n,m}}{c_{n,m-1}-d_{n,m-1}}.
\end{cases}\end{equation}
\smallskip

Using the first equation in \eqref{eq:3.1}, we subtract the columns of the determinant of the second equation in \eqref{eq:3.1}. We thus obtain the first two equations of \eqref{0 DiffEq}. The third and fourth equations of \eqref{0 DiffEq} and \eqref{eq:3.1} are the same.
\end{proof}

\begin{remark}
There are other systems related to the concept of orthogonality for which the consistency leads to non-trivial zero curvature conditions  \cite{PGR1995}, \cite{SNderK2011}, \cite{SpZh}.
\end{remark}

It turns out that the consistency conditions \eqref{eq:3.1} (or, equivalently, the zero curvature condition) are also sufficient for a sequence of \HP{} polynomials
to exist and correspond to a perfect system of functions. To complete the proof of  Theorem~\ref{T0 2} it remains to prove the following result.

\begin{proposition}\label{Prop4.5}
Suppose that the zero curvature condition
\[
0 = L_{n,m+1} M_{n,m} -  M_{n+1,m} L_{n,m},
\]
holds for a family of invertible matrices $L_{n,m}$ and $M_{n,m}$ of the form \eqref{eq:2.8} and \eqref{eq:2.11}.
Then there are two functions $f_1$ and $f_2$ such that the polynomials $P_{n,m}$ satisfying the corresponding
relations \eqref{0 RR} are the \HP{} polynomials for $f_1$ and $f_2$.
\end{proposition}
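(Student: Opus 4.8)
The plan is to reverse the construction behind Proposition~\ref{thm:1}. From the prescribed transition matrices I first read off the recurrence coefficients $(a,b,c,d)_{n,m}$ via \eqref{abcd}, then reconstruct the two functions $f_1,f_2$ from the boundary data, and finally show that the polynomials $P_{n,m}$ generated by \eqref{0 RR} are the \HP{} polynomials for $(f_1,f_2)$. The only inputs I will use about $L_{n,m},M_{n,m}$ are two: invertibility forces $a_{n,m}\neq0$ and $b_{n,m}\neq0$ for $n,m>0$, and, as established in the proof of Theorem~\ref{T0 2}, the zero curvature condition for matrices of this form yields the consistency system \eqref{eq:3.1}.

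First I would reconstruct the functions. On the bottom row the boundary condition $b_{n,0}=0$ reduces the first relation in \eqref{0 RR} to the ordinary three-term recurrence
\[
   P_{n+1,0}(x)=(x-c_{n,0})P_{n,0}(x)-a_{n,0}P_{n-1,0}(x),\qquad a_{n,0}\neq0 .
\]
By Favard's theorem for moment functionals there is a (formal) series $f_1(z)=\sum_{k\ge0}s^{(1)}_k z^{-k-1}$, normalized by a choice of $s^{(1)}_0\neq0$, whose monic orthogonal polynomials are exactly the $P_{n,0}$; in particular $\oint_\Gamma P_{n,0}(z)z^k f_1(z)\,dz=0$ for $k=0,\dots,n-1$ and the leading norms $\oint_\Gamma P_{n,0}\,z^{n}f_1\,dz$ are nonzero (being proportional to $\prod_{j=1}^{n}a_{j,0}$). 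Symmetrically, $a_{0,m}=0$ turns the second relation in \eqref{0 RR} into a three-term recurrence on the left column and produces $f_2(z)=\sum_{k\ge0}s^{(2)}_k z^{-k-1}$ with the $P_{0,m}$ as its monic orthogonal polynomials. Since the defining conditions for the \HP{} polynomial of index $(n,0)$ carry no constraint against $f_2$, that polynomial is the monic degree-$n$ orthogonal polynomial for $f_1$, i.e.\ $P_{n,0}$, and $(n,0)$ is automatically normal; the same holds at $(0,m)$. Thus the two coefficient tables agree on the two boundary lines.

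Next I would promote this boundary agreement to the whole lattice by an induction on $n+m$ that simultaneously establishes normality of $(n,m)$ and the full orthogonality \eqref{4 HPOR} for the recurrence-defined $P_{n,m}$. Writing $h_j(n,m;k)=\oint_\Gamma P_{n,m}(z)z^k f_j(z)\,dz$ and feeding \eqref{0 RR} into these integrals, all but three of the required vanishings for the new indices $(n{+}1,m)$ and $(n,m{+}1)$ are inherited automatically from the neighbours $(n,m),(n{-}1,m),(n,m{-}1)$. The surviving ``critical'' identities are
\[
   h_1(n,m;n)=a_{n,m}\,h_1(n-1,m;n-1),\qquad
   h_2(n,m;m)=b_{n,m}\,h_2(n,m-1;m-1),
\]
together with one mixed relation coupling the next integrals $h_1(n,m;n{+}1)$ and $h_2(n,m;m{+}1)$ to $c_{n,m},d_{n,m}$. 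Introducing the leading norms $H_1(n,m)=h_1(n,m;n)$ and $H_2(n,m)=h_2(n,m;m)$, the first two read $a_{n,m}=H_1(n,m)/H_1(n-1,m)$ and $b_{n,m}=H_2(n,m)/H_2(n,m-1)$; their compatibility with the prescribed $a,b$ is exactly the content of the two multiplicative equations in \eqref{eq:3.1}, while the mixed relation is controlled by the additive and determinantal equations there. In short, the zero curvature condition is precisely what closes the induction, and I would verify each critical identity by rewriting it through these norm ratios and matching against \eqref{eq:3.1}.

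The hard part will be the bookkeeping of normality, i.e.\ showing that no denominator degenerates and hence that $(f_1,f_2)$ is genuinely perfect. Concretely one must prove $S_{n,m}\neq0$ for every index in \eqref{eq:2.12}, equivalently $H_1(n,m)\neq0\neq H_2(n,m)$ and $(c-d)_{n,m}\neq0$ throughout. I would handle this by expressing the leading norms and the difference $(c-d)_{n,m}$ as ratios of the determinants $S_{n,m}$, so that the multiplicative relations in \eqref{eq:3.1} become a closed propagation law for the $S_{n,m}$; starting from the boundary values $S_{n,0},S_{0,m}$, which are nonzero because Favard's theorem guarantees nonvanishing leading norms whenever $a_{n,0},b_{0,m}\neq0$, the nonvanishing of $a_{n,m},b_{n,m}$ (from invertibility) forces $S_{n,m}\neq0$ inductively. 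Once perfectness is secured, the joint induction closes: every index is normal and each $P_{n,m}$ satisfies \eqref{4 HPOR} with full degree $n+m$, so the $P_{n,m}$ are by construction the \HP{} polynomials of the perfect system $(f_1,f_2)$, which is the assertion.
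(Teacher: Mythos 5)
Your overall strategy agrees with the paper's in its first half: both arguments restrict \eqref{0 RR} to the two boundary lines, where $a_{0,m}=b_{n,0}=0$ reduces it to ordinary three-term recurrences, invoke the spectral (Favard) theorem to produce $f_1$ and $f_2$, and then propagate inward using the consistency conditions \eqref{eq:3.1}. You diverge in how the propagation is organized. The paper builds the Riemann--Hilbert matrices $Y_{n,m}$ recursively from the transition matrices, argues that the asymptotic condition \eqref{eq:2.2} forces the first column to consist of \HP{} polynomials coinciding with $P_{n,m}$, and defers the reconstruction details to \cite{FHVanA}. You instead run an induction on $n+m$ directly on the integrals $h_j(n,m;k)$, isolating the three non-automatic vanishings per step --- the critical identities $H_1(n,m)=a_{n,m}H_1(n-1,m)$, $H_2(n,m)=b_{n,m}H_2(n,m-1)$ and one mixed relation fixing $c_{n,m},d_{n,m}$ --- as exactly what the zero curvature condition must supply. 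This is more explicit than the paper's argument and makes the role of \eqref{eq:3.1} visible, which is a genuine gain. The one soft spot is your normality bookkeeping: the determinantal formulas $a_{n,m}=S_{n+1,m}S_{n-1,m}/S_{n,m}^2$ and \eqref{DminusC} that you propose to use as a ``propagation law'' for the $S_{n,m}$ of \eqref{eq:2.12} are only valid once the given coefficients have already been identified with the determinantal ones, which is part of what is being proved; as written this is mildly circular. It is repairable by folding the identification and the normality statement into the same induction on $n+m$ (the quotient relations in \eqref{eq:3.1} together with invertibility force $(c-d)_{n,m}\neq 0$, which then yields nonvanishing of the next determinant through \eqref{DminusC} at the already-normal indices), but you should say so explicitly; the paper avoids this loop only by citing \cite{FHVanA}, so your sketch is at a comparable, not lower, level of rigor.
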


\begin{proof}
To determine the functions we first construct the polynomials $P_{n,0}$ and $P_{0,m}$. This can be done since they satisfy ordinary three-term recurrence relations.
So these polynomials are orthogonal polynomials due to the spectral theorem for orthogonal polynomials \cite[\S 2.5]{Ismail}. Let $f_1$ be the function corresponding to $P_{n,0}$
and let $f_2$ be the function for $P_{0,m}$. Next, the  consistency conditions \eqref{eq:3.1}
allow us to define $Y_{n,m}$ in a unique way for all pairs $(n,m)\in\dZ_+^2$. Due to the asymptotic condition \eqref{eq:2.2},
the first column of $Y_{n,m}$ consists of \HP{} polynomials. At the same time, these polynomials
coincide with $P_{n,m}$. Some more details on how to reconstruct the sequence $P_{n,m}$ from the marginal orthogonal
polynomials are given in \cite{FHVanA}.
\end{proof}

To conclude this subsection, note that the wave function $\Psi_{n,m}$ 
coincides with $Y_{n,m}$ for $(n,m)\in\dZ_+^2$ and it can be extended to 
the entire lattice $\dZ^2$ by the symmetry
\begin{equation}\label{SymW}
\Psi_{-n,m}=\Psi_{n,m}, \quad \Psi_{n,-m}=\Psi_{n,m},\quad \Psi_{-n,-m}=\Psi_{n,m}, \qquad n,m\in\dZ_+.
\end{equation}

\section{The underlying boundary value problem} 

In this section we discuss the discrete system and give a few algorithms of reconstructing solutions from boundary data. Finally we will consider some observable classes of initial data for which the system is solvable. 

Here we are concerned  with the question of finding a solution of the difference equations
\begin{equation}  \label{0 DiffEqD}
\begin{cases}
    \,\,\,c_{n,m+1}\,=\, c_{n,m}\,+\,\displaystyle\frac{(a+b)_{n+1,m}\,-\,(a+b)_{n,m+1}}{(c-d)_{n,m}},  \\
    \,\,\,d_{n+1,m}\,=\, d_{n,m}\,+\,\displaystyle\frac{(a+b)_{n+1,m}\,-\,(a+b)_{n,m+1}}{(c-d)_{n,m}},  \\
    \,\,\,a_{n,m+1}\,=\,a_{n,m} \,\displaystyle\frac{(c-d)_{n,m}}{(c-d)_{n-1,m}}, \\
    \,\,\,b_{n+1,m}\,=\,b_{n,m} \,\displaystyle\frac{(c-d)_{n,m}}{(c-d)_{n,m-1}},
\end{cases} \qquad n,m \geq 0\,,
\end{equation}
subject to the boundary conditions
\begin{equation}\label{BVP}
\begin{split}
 a_{0,m} = 0, \quad d_{0,m}=\hat{d}_m,\quad b_{0,m+1}=\hat{b}_{m+1}, \quad m\in\dZ_+, \\ 
 b_{n,0}=0, \quad c_{n,0}=\hat{c}_n, \quad
   a_{n+1,0}=\hat{a}_{n+1}, \quad n\in\dZ_+,
\end{split}
\end{equation}
where $\hat{c}_n$,  $\hat{d}_m$, $\hat{a}_{n+1}$, and $\hat{b}_{m+1}$ are sequences of complex numbers. More precisely, we are interested in solutions for which one has
\[
a_{n,m}\neq 0, \quad b_{n,m}\neq 0, \quad n,m > 0.
\]
According to Theorem \ref{T0 2}, such a solution exists if and only if there is a perfect system of two functions $f_1$ and $f_2$. In this case the initial data $\hat{c}_n$, $\hat{a}_{n+1}$, $\hat{d}_m$, and $\hat{b}_{m+1}$ are the entries of the $J$-fraction representations of $f_1$ and $f_2$:
\[
f_1(z)\sim
-\frac{1}{\displaystyle{z-\hat{c}_0-\frac{\hat{a}_1}{\displaystyle{z-\hat{c}_1-\frac{\hat{a}_2}{\ddots}}}}},\quad
f_2(z)\sim
-\frac{1}{\displaystyle{z-\hat{d}_0-\frac{\hat{b}_1}{\displaystyle{z-\hat{d}_1-\frac{\hat{b}_2}{\ddots}}}}},
\]
where we have
\begin{equation}\label{nondeg}
a_{n+1,0}=\hat{a}_{n+1}\ne 0, \quad b_{0,m+1}=\hat{b}_{m+1}\neq 0.
\end{equation}
Basically, this means that in order to have a well-posed boundary value problem, the initial data have to be generated through the Schur-Euclid algorithm by two functions that form a perfect system, i.e., a system of two functions that determines the entire table of multiple orthogonal polynomials. 

One of the main obstacles to construct a table of multiple orthogonal polynomials is to ensure that each index is normal, that is, the corresponding determinant $S_{n,m}$ is non-zero. {\bf It cannot be done for two arbitrary analytic functions or even for any two positive measures} and this issue was addressed for the first time by K. Mahler \cite{Mah}, who coined the notion of perfect systems. 
To be more precise, a system of two measures is called perfect if each index in the corresponding table is normal, i.e., $S_{n,m}\ne 0$ for all $n,m\in\dZ_+$. At the end of this section we give two rather general classes of perfect systems. In turn, these systems give rise to an infinite number of solutions of the discrete integrable system in question. Before going into more details about those classes, we reformulate a part of Theorem \ref{T0 2} and give a constructive proof, which actually is the way to solve \eqref{0 DiffEqD}, \eqref{BVP}.
\begin{proposition}
If the initial data $\hat{c}_n$,  $\hat{d}_m$, $\hat{a}_{n+1}$, and $\hat{b}_{m+1}$ correspond to a perfect system, then the boundary value problem
\eqref{0 DiffEqD}, \eqref{BVP} has a solution that satisfies the condition \eqref{nondeg}. Moreover, the problem can be solved in the following way. The boundary data $\hat{c}_n$,  $\hat{d}_m$, $\hat{a}_{n+1}$, and $\hat{b}_{m+1}$ define the moments and the solution $(c,\,a)_{n,m}$, $(d,\,b)_{n,m}$ can be recovered from the moments.
\end{proposition}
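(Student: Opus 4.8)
The plan is to establish the proposition by reducing the two-dimensional problem to the one-dimensional theory of orthogonal polynomials along the two coordinate axes, and then propagating into the interior of $\dZ_+^2$ via the difference equations \eqref{0 DiffEqD}. First I would observe that along the boundary $m=0$ the system degenerates: since $b_{n,0}=0$, the first recurrence relation in \eqref{0 RR} collapses to the ordinary three-term recurrence $P_{n+1,0}(x)=(x-\hat c_n)P_{n,0}(x)-\hat a_n P_{n-1,0}(x)$, and similarly along $n=0$ the second relation gives $P_{0,m+1}(x)=(x-\hat d_m)P_{0,m}(x)-\hat b_m P_{0,m-1}(x)$. By the spectral theorem for orthogonal polynomials (invoked in the proof of Proposition~\ref{Prop4.5} via \cite[\S 2.5]{Ismail}), together with the nondegeneracy \eqref{nondeg}, these two $J$-fractions define functions $f_1$ and $f_2$, and hence their Laurent coefficients $s_{j,k}$, i.e.\ the moments. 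This is exactly the content of the sentence ``the boundary data define the moments.''

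Next I would use the determinantal formulas \eqref{eq:2.12} and \eqref{eq:2.12_1} to recover the full table. Given that $(f_1,f_2)$ is a perfect system, every $S_{n,m}\neq 0$, so $P_{n,m}$ is well defined by \eqref{eq:2.12_1} for every $(n,m)\in\dZ_+^2$, and by Proposition~\ref{thm:1} these polynomials satisfy the nearest-neighbor recurrence \eqref{0 RR}. The coefficients $(c,a)_{n,m}$ and $(d,b)_{n,m}$ read off from \eqref{0 RR} then constitute a solution of \eqref{0 DiffEqD} (this is Theorem~\ref{T0 2}, whose forward direction is Proposition~\ref{Pr2}), and they agree with the prescribed boundary data by construction. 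The condition $a_{n,m}\neq 0$, $b_{n,m}\neq 0$ for $n,m>0$ follows from normality of all indices, again by Theorem~\ref{T0 2}. This recovers the solution ``from the moments,'' completing the existence claim.

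The delicate point, and the one I expect to be the main obstacle, is the assertion that the boundary $J$-fraction data \emph{uniquely determine} two genuine functions forming the \emph{same} perfect system we started from, rather than merely some pair of functions. One must argue that the moments $s_{1,k}$ extracted from the $\hat c,\hat a$ data and the moments $s_{2,k}$ from the $\hat d,\hat b$ data are precisely the marginal moment sequences of a perfect system, and that the interior solution generated by \eqref{0 DiffEqD} from this boundary coincides with the one generated by the determinantal table. Concretely, one needs that the propagation rule \eqref{0 DiffEqD} is \emph{consistent} with the determinantal values, i.e.\ that stepping from the axes into the bulk via the difference equations reproduces exactly the coefficients obtained from \eqref{eq:2.12_1}. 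I would handle this by noting that \eqref{0 DiffEqD} is equivalent (Theorem~\ref{T0 2}) to the zero curvature condition, and by invoking Proposition~\ref{Prop4.5}: the consistency conditions \eqref{eq:3.1} allow $Y_{n,m}$ to be defined uniquely for all $(n,m)$, and the uniqueness of the Riemann-Hilbert solution \eqref{eq:2.3} forces the first-column entries to be the \HP{} polynomials for $f_1,f_2$. Thus the interior solution is pinned down, and the reconstruction from the moments is the same object as the propagated solution, so they coincide.

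Finally, I would remark that the actual algorithmic recovery ``from the moments'' can be carried out either through the determinants \eqref{eq:2.12}--\eqref{eq:2.12_1} or, more in the spirit of the paper, through the two-dimensional Schur-Euclid scheme described after Proposition~\ref{Pr2}, starting from $Y_0(z)$ built out of the Cauchy transforms $\oint_\Gamma f_j(x)/(x-z)\,dx$. The details beyond this are routine verifications that the extracted coefficients satisfy the boundary conditions \eqref{BVP} and the nondegeneracy \eqref{nondeg}, which follow directly from the $J$-fraction normalization and the perfectness hypothesis.
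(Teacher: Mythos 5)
Your argument is correct and takes essentially the same route as the paper: recover the moments of $f_1$ and $f_2$ from the boundary $J$-fraction data (the standard inverse step for three-term recurrences), then use perfectness ($S_{n,m}\neq 0$ for all indices) to build the interior solution, with existence and the nondegeneracy \eqref{nondeg} delegated to Theorem~\ref{T0 2}. The only real difference is the final reconstruction step: where you rebuild the polynomial table via \eqref{eq:2.12_1} and read off the coefficients, the paper expresses $a_{n,m}$, $b_{n,m}$ and $d_{n,m}-c_{n,m}$ directly as ratios of the determinants $S_{n,m}$ (formulas from \cite{vanA2011}, including \eqref{DminusC}) and then recovers $c_{n,m}$ and $d_{n,m}$ \emph{individually} by telescoping the first two equations of \eqref{0 DiffEqD} from the axes. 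That telescoping also settles the ``delicate point'' you raise more directly than your appeal to Proposition~\ref{Prop4.5}: the determinantal coefficients satisfy \eqref{0 DiffEqD} by Theorem~\ref{T0 2}, and propagation from the boundary is unique precisely because $(c-d)_{n,m}\neq 0$, which \eqref{DminusC} and perfectness guarantee.
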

\begin{proof}
The statement is a straightforward consequence of Theorem \ref{T0 2}. So, we know that the underlying system of functions $f_1$ and $f_2$ is perfect, that is, we have that the corresponding moments are such that $S_{n,m}\ne 0$ for all $n,m\in\dZ_+$. As a matter of fact, it is a standard technique that recovers the moments from the entries of the corresponding $J$-fraction (see \cite{Ach1961}). Furthermore, using the moments one can reconstruct the solution in the following manner. The coefficients $a_{n,m}$ and $b_{n,m}$ can be found via the formulas from \cite{vanA2011}
\begin{equation*}
a_{n,m} = \frac{S_{n+1,m} S_{n-1,m}}{\big(S_{n,m}\big)^2}, \quad  
b_{n,m} = \frac{S_{n,m+1} S_{n,m-1}}{\big(S_{n,m}\big)^2}. 
\end{equation*} 
We also know from \cite{vanA2011} that 
\begin{equation}\label{DminusC}
d_{n,m}-c_{n,m} = \frac{S_{n,m} S_{n+1,m+1}}{S_{n+1,m} S_{n,m+1}}.
\end{equation}  
Finally, the rest are found by summation of the first and second equations of the system for consecutive indices
\begin{equation*}
\begin{split}
c_{n,m+1}\,=\, c_{n,0}\,+\sum_{i=1}^m\displaystyle\frac{(a+b)_{n+1,i}\,-\,(a+b)_{n,i+1}}{(c-d)_{n,i}},  \\
d_{n+1,m}\,=\, d_{0,m}\,+\sum_{i=1}^n\displaystyle\frac{(a+b)_{i+1,m}\,-\,(a+b)_{i,m+1}}{(c-d)_{i,m}}, 
\end{split}
\end{equation*}
since the elements on the right hand sides are already determined or are part of the initial data.
\end{proof}

Sometimes, given a perfect system, it is easier to find the \HP{} polynomials rather then moments and for this reason we can elaborate a bit more on the idea mentioned in Remark 4.3 in order to see some continued fractions in the conventional sense. Following \cite{GS} we introduce the following functions
\[
m_-^{(1)}(z,n,m)=-\frac{P_{n,m}(z)}{P_{n+1,m}(z)}, \quad
m_-^{(2)}(z,n,m)=-\frac{P_{n,m}(z)}{P_{n,m+1}(z)},
\]
which can serve as a tool for solving the system \eqref{0 DiffEq}.
\begin{proposition}\label{branchS} 
If the initial data $\hat{c}_n$,  $\hat{d}_m$, $\hat{a}_{n+1}$, and $\hat{b}_{m+1}$ correspond to a perfect system, then the boundary value problem
\eqref{0 DiffEqD}, \eqref{BVP} has a solution that satisfies the condition \eqref{nondeg}.
Furthermore, the boundary value problem  \eqref{0 DiffEqD}, \eqref{BVP} can be solved in the following way. 
From the initial data $(c,\,a)_{n,0}$, $(d,\,b)_{0,m}$ one can reconstruct the family of polynomials $P_{n,m}$
(which can formally be done  via formula \eqref{eq:2.12_1}). In turn, these polynomials define the functions $m_-^{(1)}(z,n,m)$ and $m_-^{(2)}(z,n,m)$, which admit
the following continued fraction expansions 
\[ 
m_-^{(1)}(z,n,m)=
-\frac{1}{\displaystyle z-c_{n,m}-\frac{a_{n,m}}{z-c_{n-1,m}-\dots}-\frac{b_{n,m}}{z-d_{n,m-1}-\dots}},
\]
\[
m_-^{(2)}(z,n,m)=-\frac{1}{\displaystyle z-d_{n,m}-\frac{a_{n,m}}{z-c_{n-1,m}-\dots}-\frac{b_{n,m}}{z-d_{n,m-1}-\dots}}
\]
that determine the solution $(c,\,a)_{n,m}$, $(d,\,b)_{n,m}$ to the equation \eqref{0 DiffEq}. 
\end{proposition}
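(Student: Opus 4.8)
The existence of a solution of \eqref{0 DiffEqD}, \eqref{BVP} satisfying \eqref{nondeg} is already contained in Theorem~\ref{T0 2}: under the perfectness hypothesis the two functions $f_1,f_2$ whose $J$-fractions carry the prescribed entries generate an entire table of normal indices, and the associated recurrence coefficients solve the system with $a_{n,m},b_{n,m}\neq 0$ for $n,m>0$. Hence the genuinely new content is the branched continued fraction scheme, which I would establish in three moves. First I would recover the polynomials. The boundary data $\hat c_n,\hat a_{n+1}$ and $\hat d_m,\hat b_{m+1}$ are precisely the entries of the $J$-fractions of $f_1$ and $f_2$, so the classical inversion recovering the moments of a measure from its $J$-fraction (cf.\ \cite{Ach1961}) yields $s_k^{(1)},s_k^{(2)}$; substituting these into the determinantal formula \eqref{eq:2.12_1} produces the whole table $\{P_{n,m}\}$, each entry of full degree because the system is perfect. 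The functions $m_-^{(1)}(z,n,m)$ and $m_-^{(2)}(z,n,m)$ are then well-defined rational functions.

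The core step is to convert the nearest-neighbour recurrences \eqref{0 RR} into the stated fractions. Dividing both relations in \eqref{0 RR} by $P_{n,m}(z)$ and using $P_{n-1,m}/P_{n,m}=-m_-^{(1)}(z,n-1,m)$, $P_{n,m-1}/P_{n,m}=-m_-^{(2)}(z,n,m-1)$, together with $P_{n+1,m}/P_{n,m}=-1/m_-^{(1)}(z,n,m)$ for the first relation and $P_{n,m+1}/P_{n,m}=-1/m_-^{(2)}(z,n,m)$ for the second, I obtain the functional recursions
\begin{equation*}
m_-^{(1)}(z,n,m)=-\frac{1}{z-c_{n,m}+a_{n,m}\,m_-^{(1)}(z,n-1,m)+b_{n,m}\,m_-^{(2)}(z,n,m-1)},
\end{equation*}
\begin{equation*}
m_-^{(2)}(z,n,m)=-\frac{1}{z-d_{n,m}+a_{n,m}\,m_-^{(1)}(z,n-1,m)+b_{n,m}\,m_-^{(2)}(z,n,m-1)}.
\end{equation*}
Since $a_{n,m}m_-^{(1)}(z,n-1,m)=-a_{n,m}/(z-c_{n-1,m}+\cdots)$ and $b_{n,m}m_-^{(2)}(z,n,m-1)=-b_{n,m}/(z-d_{n,m-1}+\cdots)$, iterating these two identities reproduces exactly the branched continued fractions in the statement, including the leading denominators $z-c_{n-1,m}$ and $z-d_{n,m-1}$ of the two sub-fractions. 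Crucially, each iteration lowers one of the two indices by one: the $a$-branch lowers $n$ and the $b$-branch lowers $m$, while the boundary relations $a_{0,m}=0$, $b_{n,0}=0$ suppress the corresponding branch as soon as that index reaches $0$. Thus every path terminates after at most $n+m$ steps in an ordinary, non-branched $J$-fraction, so the branched continued fraction is a finite, well-defined object and no convergence question arises.

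Finally, the expansions determine the solution. From $-1/m_-^{(1)}(z,n,m)=(z-c_{n,m})+\mathcal{O}(1/z)$ one reads off $c_{n,m}$, and likewise $d_{n,m}$ from $m_-^{(2)}$; the remaining $\mathcal{O}(1/z)$ tail equals $-a_{n,m}P_{n-1,m}/P_{n,m}-b_{n,m}P_{n,m-1}/P_{n,m}$, whose decomposition along the two linearly independent monic polynomials $P_{n-1,m}$ and $P_{n,m-1}$ of degree $n+m-1$ fixes $a_{n,m}$ and $b_{n,m}$ uniquely; equivalently, these are simply the entries appearing at the first level of the two continued fractions. I expect the main obstacle to be structural rather than computational: unlike the linear chains of Section~\ref{sec:31}, these fractions split into two sub-fractions at every level, so the real work is verifying that the doubly-indexed recursion is well-founded, that the two boundary conditions truncate the two families of branches consistently, and that the $a$/$b$ separation is unambiguous. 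The sign and index bookkeeping in passing from \eqref{0 RR} to the functional recursions, although routine, is the place where mistakes are easiest to make and should be checked with care.
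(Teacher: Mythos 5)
Your proof follows essentially the same route as the paper: both rewrite \eqref{0 RR} as the coupled Riccati-type recursions for $m_-^{(1)}$ and $m_-^{(2)}$, iterate them to produce the branched fractions, and then read the coefficients off the asymptotics at infinity. The only point of divergence is the final disentangling of $a_{n,m}$ from $b_{n,m}$, which is also the only genuinely delicate step: the paper extracts the two scalars $a_{n,m}+b_{n,m}$ and $a_{n,m}c_{n-1,m}+b_{n,m}d_{n,m-1}$ from the expansion and solves the resulting $2\times 2$ linear system, whose determinant $c_{n-1,m}-d_{n,m-1}=c_{n-1,m-1}-d_{n-1,m-1}$ is nonzero by \eqref{eq:3.1} and \eqref{DminusC}. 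Your alternative, decomposing the tail along $P_{n-1,m}$ and $P_{n,m-1}$, is equivalent, but the linear independence you assert is not free: it rests on exactly the same nonvanishing, via $P_{n,m-1}-P_{n-1,m}=(d_{n-1,m-1}-c_{n-1,m-1})P_{n-1,m-1}$, and the parenthetical claim that $a_{n,m}$, $b_{n,m}$ are ``simply the entries at the first level'' glosses over precisely this separation issue; make the dependence on perfectness explicit there.
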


\begin{proof}
It is not so hard to see that the relations \eqref{0 RR} can be rewritten as the following generalization of the discrete Riccati equation:
\begin{align}
m_-^{(1)}(z,n,m)=\frac{-1}{z-c_{n,m}+a_{n,m}m_-^{(1)}(z,n-1,m)+b_{n,m}m_-^{(2)}(z,n,m-1)}, \label{Mminus1}\\
m_-^{(2)}(z,n,m)=\frac{-1}{z-d_{n,m}+a_{n,m}m_-^{(1)}(z,n-1,m)+b_{n,m}m_-^{(2)}(z,n,m-1)}. \label{Mminus2}
\end{align}
Then, we see that applying \eqref{Mminus1} and \eqref{Mminus2} iteratively leads to the continued fraction expansions under consideration, which allow us to recursively reconstruct the solution from the initial boundary data. Namely, it is clear how to reconstruct $c_{n,m}$ and $d_{n,m}$ for all indices from $m_-^{(1)}(z,n,m)$ and $m_-^{(2)}(z,n,m)$ in the first place. Then the first term of the asymptotic expression
\[
-\frac{1}{m_-^{(1)}(z,n,m)}-z+c_{n,m}, \quad z\to \infty
\]
(or, equivalently, the analogous one for $m_-^{(2)}(z,n,m))$ determines $a_{n,m}+b_{n,m}=f_{n,m}$ and the consecutive term gives
$a_{n,m}c_{n-1,m}+b_{n,m}d_{n,m-1}=g_{n,m}$. Next, taking into account the first equation in \eqref{eq:3.1} we have that
\[
c_{n-1,m}-d_{n,m-1}=c_{n-1,m-1}-d_{n-1,m-1}.
\] 
Since the system is perfect we get that $c_{n-1,m-1}-d_{n-1,m-1} \neq 0$ (see \eqref{DminusC}). Therefore, $a_{n,m}$ and $b_{n,m}$ are uniquely determined.
\end{proof}

Let us emphasize here that the above given continued fractions branch into two continued fractions on each level. Actually, what we see is that there are two fractions behind the scene. On the one hand, they are similar to the classical one in \eqref{toGen} but, on the other hand, they have a certain two-dimensional structure and the two fractions are identical except for one entry. 

\begin{remark} There is one more algorithm available, which is obtained by combining the well-known  Jacobi-Perron algorithm (see \cite{NS} for the details) and a result from \cite{FHVanA}. Namely,  the Jacobi-Perron algorithm expands the vector $(f_1,f_2)$, where $f_1$ and $f_2$ form a perfect system, into a vector continued fraction. The approximants of this vector continued fraction consists of rational functions with the same denominator. More importantly, the denominators  are the \HP{} polynomials that correspond to the so-called step-line, that is when $m=n$ and $m=n-1$. Furthermore, the step-line polynomials satisfy recurrence relations whose coefficients are the input for the algorithm given in \cite{FHVanA} to reconstruct all the coefficients of the nearest neighbour recurrence relations from the step-line. 
\end{remark}

\subsection{Angelesco systems.} A. Angelesco considered in \cite{Ang} the following systems of measures. Let $\Delta_1$ and $\Delta_2$ be disjoint bounded intervals on the real line and $(\mu_1,\mu_2)$ be a system of measures such that
$\supp \mu_j = \Delta_j$.

Fix $\vec{n} = (n_1,n_2) \in {\dZ}_+^{2}$ and consider the multiple orthogonal polynomials of the so called Angelesco system $(\widehat{\mu}_1,\widehat{\mu}_2)$ relative to $\vec{n}$. Here  $\widehat{\mu}$ denotes the Cauchy transform of $\mu$:
\[
\widehat{\mu}(z) = \int \frac{d\mu(x)}{z-x}.
\]
By construction, we have that
\[ \int x^{\nu} P_{\vec{n}}(x) \, d\mu_j(x) =0, \qquad \nu = 0,\ldots,n_j -1,\quad j=1,2.
\]
Therefore, $P_{\vec{n}}$ has $n_j$ simple zeros in the interior (with respect to the Euclidean topology of ${\dR}$) of $\Delta_j$. As a consequence, since the intervals $\Delta_j$ are disjoint, $\deg P_{\vec{n}} = |\vec{n}| = n_1+n_2$ and
\textit{Angelesco systems are perfect}.

\subsection{Nikishin systems}
Unfortunately, Angelesco's paper received little attention and such systems reappeared many years later in \cite{Nik1} where E.M. Nikishin  deduced some of their formal properties.
He also introduced another class of systems for which the perfectness was proved only recently \cite{Nik}.

To get an idea about these systems, let us consider two disjoint bounded intervals $\Delta_{1}, \Delta_{2}$ on the real line. Suppose we are given two measures $\sigma_{1}$ and $\sigma_{2}$ supported
on $\Delta_{1}$ and $\Delta_{2}$, respectively. With these two measures we define a third one in the following way
\[ d\left<\sigma_{1},\sigma_{2}\right>(x) = \widehat{\sigma}_{2}(x)\, d\sigma_{1}(x);
\]
that is, one multiplies the first measure by a weight formed by the Cauchy transform of the second measure. Thus, we have arrived at the notion of a Nikishin system. A system of two measures $(\mu_1, \mu_2)$ of the form
\[
d\mu_1(x)=d\sigma_{1}(x), \qquad d\mu_2(x)=\int \frac{d\sigma_1(t)}{x-t}\, d\mu_1(x)
=d\left<\sigma_{1},\sigma_{2}\right>
\]
is called a Nikishin system (of order 2). In fact, a Nikishin system can be defined for any finite number of measures.
We also emphasize that the measures from a Nikishin system have the same support,
which is a totally different situation than in the case of an Angelesco system.

Finally, it is worth mentioning here that it was a long standing problem to prove that \textit{a general Nikishin system
$(\mu_1, \mu_2, \dots, \mu_p)$ is perfect for $p\ge 2$}. This fact was finally proved
in the remarkable paper \cite{FL}. 




\end{document}